\documentclass[10pt]{amsart}

\usepackage{color}
\definecolor{dred}{rgb}{0.8,0,0.2}
\newcommand{\red}{\textcolor{dred}}
\definecolor{dblue}{rgb}{0.2,0,0.8}

\definecolor{dgreen}{rgb}{0.1,0.68,0.1}

\usepackage[all]{xy}
\usepackage{amssymb}
\usepackage{graphicx}

\usepackage{txfonts}
\usepackage{enumerate}
\usepackage{caption}
\usepackage{accents}

\usepackage{dsfont}

\def\comment#1{}

\newcommand{\op}{o}
\newcommand{\cl}{c}

\renewcommand{\ll}{\llbracket}


 \usepackage{amssymb}
\usepackage{amsmath}
\usepackage{amsthm}
\usepackage{amscd}
\usepackage{url}
\usepackage{color}

\theoremstyle{plain}
\newtheorem{them}{Theorem}[section]
\newtheorem{pro}[them]{Proposition}

\newtheorem{lem}[them]{Lemma}

\newtheorem{theorem}[them]{Theorem}

\newtheorem{lemma}[them]{Lemma}
\newtheorem{proposition}[them]{Proposition}

\theoremstyle{definition}
\newtheorem{defi}[them]{Definition}

\theoremstyle{remark}
\newtheorem{rem}[them]{Remark}
\newtheorem{ex}[them]{Example}

\DeclareMathOperator{\supp}{supp}

\newcommand{\HL}{Henry-Labord{\`e}re }

\theoremstyle{remark}
\newtheorem{remark}[them]{Remark}
\newtheorem{example}[them]{Example}

\DeclareMathOperator{\id}{Id} 
 
\DeclareMathOperator{\proj}{proj}

\DeclareMathOperator{\spt}{spt}

\DeclareMathOperator{\law}{Law}

\newcommand{\eps}{\varepsilon}
\newcommand{\Lg}{\lambda}
\renewcommand{\P}{\mathbb{P}}
\newcommand{\p}{\mathcal{P}}
\newcommand{\M}{\Pi_M} 
\newcommand{\hM}{\hat{\Pi}_M} 
\newcommand{\m}{\mathcal{M}}
\newcommand{\R}{\mathbb{R}}

\newcommand{\Q}{\mathbb{Q}}
\newcommand{\N}{\mathbb{N}}

\newcommand{\dd}{\mathrm{d}}

\newcommand{\E}{\mathbb{E}}

\newcommand{\sto}{\mathrm{sto}}

\newcommand{\leqp}{\preceq_+} 
\newcommand{\leqc}{\preceq_{C}} 
\newcommand{\leqs}{\preceq_\sto} 
\newcommand{\leqcp}{\preceq_{C,+}} 
\newcommand{\leqe}{\preceq_{C,+}} 
 %
 %

\newcommand{\lc}{\mathrm{lc}}
\newcommand{\sun}{\mathrm{sun}}

\newcommand{\I}{\mathds 1}

\newcommand{\spac}{[0,1]\times \R\times \R}
\newcommand{\spa}{[0,1]\times \R}

\renewcommand{\subset}{\subseteq}
\renewcommand{\supset}{\supseteq}

\title{Shadow couplings}
\author{Mathias Beiglb\"ock and Nicolas Juillet}

\thanks{The first author acknowledges support through FWF grant Y782. The second author was partially supported by the ``Programme ANR JCJC GMT'' (ANR 2011 JS01 011 01).  }

\subjclass[2010]{60G42}
\keywords{couplings, martingales, peacocks, convex order, optimal transport}

\begin{document}
\begin{abstract}
A classical result of Strassen asserts that given probabilities $\mu, \nu$ on the real line which are in convex order, there exists a \emph{martingale coupling} with these marginals, i.e.\ a random vector $(X_1,X_2)$ such that  $X_1\sim \mu, X_2\sim \nu$ and $\E[X_2|X_1]=X_1$. 
 Remarkably, it is a non trivial problem to construct particular solutions to this problem. 
 In this article, we introduce a family of such  martingale couplings, each of which admits several characterizations in terms of optimality properties / geometry of the support set / representation through a Skorokhod embedding. 
As a particular element of  this family we recover
the (left-) curtain martingale transport, which has recently been studied  \cite{BeJu16, HeTo13, CaLaMa14, BeHeTo15} and which can be viewed as a martingale analogue of the classical monotone rearrangement.
As another canonical element of  this family we identify a martingale coupling that resembles the usual \emph{product coupling} and appears as an optimizer in the general transport problem recently introduced by Gozlan et al. 
In addition, this coupling provides an explicit example of a Lipschitz-kernel, shedding new light on Kellerer's proof of the existence of Markov martingales with specified marginals.

\medskip

\noindent \emph{Keywords:} Strassen's theorem,  Kellerer's theorem, peacocks, (martingale) optimal transport, general transport costs, Skorokhod embedding

\end{abstract}
\maketitle


\section{Introduction}


Given Polish spaces $X,Y$, a measure $\pi$ on $X\times Y$ with marginals $\mu$ and $\nu$ is called a {\it transport plan} from $\mu$ to $\nu$ or a {\it coupling} of $\mu$ and $\nu$. Let $\Pi(\mu,\nu)$ be the space of transport plans of marginals $\mu$ and $\nu$.
We will usually consider probability measures $\mu$, $\nu$ on the real line having first moments in which case the set of \emph{martingale transport plans} is defined as 
\begin{align}\M(\mu,\nu)&\,=\{\pi=\law(X,Y)\in\Pi(\mu,\nu),\,\E(Y|X)=X\}\\
&\, =\textstyle \{\pi\in \Pi(\mu,\nu): \int y\, \dd\pi_x=y \mbox{ for $\hat\mu$-a.e.\ $(u,x)$ }\}.\end{align}
Here the constraint $\E(Y|X)=X$ means that $\E(Y|X=x)=x$ for $\mu$-almost every $x\in \R$, while $(\pi_x)_x$ denotes the disintegration of $\pi$ wrt\ $\mu$. 
By a classical result of Strassen \cite{St65}, $\M(\mu,\nu)$ is non-empty if and only if
 $\mu, \nu$ are in the convex order $\mu\leqc \nu$, i.e.\ both measures have finite first moments and $\int \phi\, \dd\mu\leq \int \phi\, \dd\nu$ for every convex $\phi:\R\to \R$.

In \cite{BeJu16} we introduced the (left-) curtain coupling $\pi_{\lc}$ which can be seen as a martingale analogue of the monotone rearrangement coupling. An  explicit description is provided  when $\mu$ is finitely supported (\cite[Chapter 2]{BeJu16}). Another construction using differential equations is given by \HL and Touzi \cite{HeTo13} for  sufficiently regular distributions.  According to \cite{Ju_ihp} the coupling method is continuous so that all left-curtain couplings for general measures $\mu$ and $\nu$ can be approximated using either  of the two mentioned constructions, see \cite[Remark 2.18]{Ju_ihp}.

  In this paper we will view $\pi_{\lc}$ as one extreme of an infinite family of martingale couplings whose construction is based each on a different parametrization of $\mu$: the curtain coupling will be recovered as one  `end' of this family for a \emph{horizontal} parametrization of $\mu$ (a curtain closed from left to right), while at the other end of the spectrum using a \emph{vertical} parametrization we will obtain a new and rather different type of canonical coupling that we shall call sunset coupling $\pi_{\sun}$.  This coupling can be seen as the martingale analogue of the product coupling $\mu\times \nu$ (in classical optimal transport). 
  In view of this it is natural that $\pi_{\sun}$ does not appear as an optimizer of the martingale version of the transport problem. However, we shall see in Theorem \ref{MainTheorem} and Section 5 below that enjoys some optimality properties of a different type. 
 A further particular property is that $\pi_{\sun}$ yields an explicit example of a martingale transport plan which has the \emph{Lipschitz(-Markov) property}. The existence of a martingale transport plan with this property is a key ingredient in all (to the best of our knowledge) proofs of Kellerer's Theorem \cite{Ke72} on the existence of Markov martingales with given marginals. Previous constructions of such transport plans were either non-constructive or relied on particular solutions to the Skorokhod embedding problem.

\subsection{Notation and Main Results}\label{subsec:main_result}
We write $\lambda$ for the Lebesgue measure on the unit interval and assume that $\mu\leqc \nu$. We  fix a \emph{lift} of $\mu$, that is,  a probability $\hat \mu\in \Pi(\lambda, \mu)$ that will serve as a parameter in the construction of a general version of the left-curtain coupling. 
The set of \emph{lifted martingale transport plans} is 
$$\hM(\hat \mu, \nu):=\textstyle\left\{\hat \pi\in \Pi(\hat \mu, \nu): \int y\,  \dd\hat\pi_{u,x}=x \mbox{ for $\hat\mu$-a.e.\ $(u,x)$ }\right\},$$ 
where $(\pi_{u,x})$ denotes the disintegration of  $\hat \pi $ wrt\ $\hat \mu$.



\begin{align*}
\xymatrix @!=2cm
{
{}&\boxed{\text{``lifted'' } \hat{\theta}} \ar[ld]^-{\txt{disintegration}}\ar[rd]^-{\txt{$\theta_{[0,u]}(A)=\hat{\theta}([0,u]\times A)$}}&{}\\
\boxed{(\theta_u)_{u\in[0,1]}} \ar[ru]^-{\txt{integration}} \ar[rr]^-{\txt{primitive}} &{}&\boxed{(\theta_{[0,u]})_{u\in[0,1]}} \ar[lu] \ar[ll]^-{\txt{derivative}}
}
\end{align*}

We shall use two further ways to denote the objects $\hat \mu\in \Pi(\lambda, \mu)\subset \p([0,1]\times \R)$ and $ \hat \pi\in \Pi(\hat \mu, \nu)\subset \p([0,1]\times \R^2)$: Given a measure $\hat \theta $ on $[0,1]\times \R^d$ (where $d=1,2$), with $\proj_{[0,1]}(\hat \theta)=\lambda$, we  write 
\begin{enumerate}
\item $(\theta_u)_{u\in [0,1]}$ for the ($\lambda$-a.s.\ unique) disintegration of $\hat \theta $ wrt\ $\lambda$.
\item $(\theta_{[0,u]})_{u\in [0,1]}$ for the family of measures defined for every $u\in[0,1]$ by
$$\theta_{[0,u]}(A)=\hat{\theta}([0,u]\times A)=\int_0^u\theta_{s}(A)\,ds.$$
\end{enumerate}
Our main result is the following.
\begin{theorem}\label{MainTheorem}
Let $\mu, \nu$ be real probability measures in convex order and $\hat \mu \in \Pi(\lambda, \mu)$. There exists a unique $\hat \pi\in \hat{\Pi}_M(\hat \mu, \nu)$ satisfying any, and then all of the following properties:
\begin{enumerate}
\item $\hat \pi$ minimizes 
\begin{align}
\hat \gamma \mapsto \int (1-u)\sqrt{1+y^2} \, \dd\hat \gamma
\end{align}
on the set $\hM(\hat \mu, \nu)$. 
\item $\hat \pi=\law(U, B_0,B_\tau)$, where 
$(B_t)$ is one dimensional Brownian motion, $\law (U, B_0)=\hat \mu$ and $\tau$ is the hitting  time of the  process $t\mapsto (U, B_t)$ into a \emph{left barrier} (i.e.\ a Borel set $R\subset \spa$ such that $(u, x)\in R, v\leq u$ implies $(v,x)\in R$).
\item  $\hat\pi(\hat \Gamma)= 1$ for a Borel set $\hat \Gamma\subseteq \spac$ which is \emph{monotone} in the sense that for all $s,t,x,x',y^-,y^+, y'$ 
$$ s< t, (s,x,y^-), (s,x,y^+), (t,x',y')\in \Gamma \Rightarrow y'\notin ]y^-,y^+[.$$ 
\item For all $u\in [0,1]$, the projection of $\pi_{[0,u]}$ onto the second coordinate is the \emph{shadow} of $\mu_{[0,u]}$ onto the measure $\nu$.
\end{enumerate}
\end{theorem}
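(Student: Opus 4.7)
My plan is to construct a coupling $\hat\pi$ satisfying (4) explicitly via a shadow procedure, establish the cycle (4)$\Leftrightarrow$(3)$\Leftrightarrow$(1) together with uniqueness, and then realize (2) by a left-barrier Skorokhod embedding. I expect the main obstacle to be the equivalence (1)$\Leftrightarrow$(3), where the specific form $(1-u)\sqrt{1+y^2}$ of the cost plays an essential role.

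For existence of a $\hat\pi\in\hM(\hat\mu,\nu)$ satisfying (4) I would handle first the case of a finitely supported lift $\hat\mu$ with atoms $(u_1,x_1),\dots,(u_N,x_N)$ ordered by $u_1<\dots<u_N$, defining $\hat\pi$ inductively by sending the $k$-th atom to a martingale image onto the shadow of $\mu_{u_k}$ in the remaining mass $\nu-\sum_{j<k}\nu_j$; such an image exists because by definition of the shadow, $\mu_{u_k}$ is dominated in convex order by this sub-measure. A general $\hat\mu$ is then treated by approximation: take discrete lifts $\hat\mu^n\to\hat\mu$, extract a weak limit of the associated couplings (tight because second marginals all equal $\nu$), and use weak continuity of $\alpha\mapsto S^\nu(\alpha)$ from \cite{Ju_ihp} to transport (4) to the limit.

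The ring (4)$\Rightarrow$(3)$\Rightarrow$ uniqueness $\Rightarrow$(4) forms the structural core. For (4)$\Rightarrow$(3), failure of monotonicity on a positive-$\hat\pi$-measure subset would let one construct a competing $\hat\pi'\in\hM(\hat\mu,\nu)$ whose projection of $\pi'_{[0,s']}$ onto the second coordinate, for some intermediate $s'$, is strictly dominated in convex order by $S^\nu(\mu_{[0,s']})$, contradicting the minimality property that defines the shadow. For (3)$\Rightarrow$ uniqueness I would adapt the uniqueness proof of \cite{BeJu16} for the left-curtain coupling, with the parameter $u$ playing the role that the source coordinate $x$ plays there: the three-point monotonicity forces any two candidates to coincide through a layer-by-layer induction in $u$-order. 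Any (3)-coupling then inherits (4) by comparison with the explicit shadow construction and the uniqueness just established.

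The main obstacle lies in (1)$\Leftrightarrow$(3). For (1)$\Rightarrow$(3), I would use $c$-cyclical monotonicity for martingale transport: given a triple $(s,x,y^-),(s,x,y^+),(t,x',y')\in\hat\Gamma$ with $s<t$ and $y'\in\, ]y^-,y^+[\, $, design a martingale-preserving $\eps$-swap that exchanges a fragment $p\delta_{y^-}+(1-p)\delta_{y^+}$ of the conditional at $(s,x)$, with $p=(y^+-x)/(y^+-y^-)$ chosen so the fragment has barycenter $x$, against an appropriate $\eps$-fragment of the conditional at $(t,x')$ centred on $y'$; strict convexity of $y\mapsto\sqrt{1+y^2}$ together with the strictly decreasing factor $1-u$ produce a strict decrease of $\int c\,\dd\hat\pi$, contradicting optimality. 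For (3)$\Rightarrow$(1), the cost is lower semicontinuous and has linear growth, so a minimizer exists in $\hM(\hat\mu,\nu)$, must satisfy (3) by what precedes, and therefore coincides with the unique (3)-coupling. Finally for (2), I would define the left barrier $R\subseteq\spa$ so that $(U,B_0,B_\tau)\sim\hat\pi'$ has second-coordinate projection of $\hat\pi'\big|_{[0,u]\times\R^2}$ equal to $S^\nu(\mu_{[0,u]})$ for every $u$, and conclude $\hat\pi'=\hat\pi$ by the uniqueness already established.
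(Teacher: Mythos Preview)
Your overall architecture is reasonable, but there is one concrete error, one structural gap, and the paper's route differs from yours in an instructive way.

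\emph{The swap for (1)$\Rightarrow$(3).} With $p=(y^+-x)/(y^+-y^-)$ the fragment $p\,\delta_{y^-}+(1-p)\,\delta_{y^+}$ has barycenter $x$, so exchanging it against a fragment ``centred on $y'$'' at $(t,x')$ breaks the martingale constraint at both fibres unless $x=y'=x'$. The competitor the paper uses takes $\lambda$ with $y'=(1-\lambda)y^-+\lambda y^+$ and swaps $(1-\lambda)\delta_{(s,x,y^-)}+\lambda\delta_{(s,x,y^+)}+\delta_{(t,x',y')}$ against $(1-\lambda)\delta_{(t,x',y^-)}+\lambda\delta_{(t,x',y^+)}+\delta_{(s,x,y')}$; both exchanged pieces have barycenter $y'$, so the conditional means are preserved, and strict convexity of $y\mapsto\sqrt{1+y^2}$ together with $s<t$ give the strict cost decrease.

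\emph{Uniqueness from (3), and the role of (2).} Your plan to ``adapt the uniqueness proof of \cite{BeJu16}'' is the thin spot. The paper does not argue this combinatorially; it \emph{uses} (2) as the mechanism for uniqueness. From a monotone $\hat\Gamma$ it builds an open and a closed left barrier $R_o\subset R_c$ and shows---via a continuation property and the optional section theorem---that any Skorokhod realization of $\hat\pi$ is sandwiched, $\tau_{R_c}\le\tau\le\tau_{R_o}$, whence $\hat\pi=\law(U,B_0,B_{\tau_{R_c}})$. Uniqueness of monotone couplings then comes from a Loynes argument: two left barriers embedding the same $\nu$ have a.s.\ equal hitting times. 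So in the paper (2) is not a corollary tacked on at the end but the key step that yields uniqueness; a barrier-free proof of (3)$\Rightarrow$uniqueness in the lifted setting (where distinct $(u,x)$ may share the same $x$) would require substantially more than a reference to \cite{BeJu16}.

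\emph{Smaller differences.} For existence of (4) the paper does not approximate: it differentiates the shadow curve $u\mapsto S^\nu(\mu_{[0,u]})$ directly and identifies $\nu_u=\mu_u P_{T(u)}$ with $T(u)=\spt(\nu-\nu_{[0,u]})$, giving $\pi_u=\mu_u(\id\times P_{T(u)})$ explicitly; your approximation route is viable but gives less information. For (4)$\Rightarrow$(3), your idea---the (correct) swap would push the $y$-projection of $\pi'_{[0,s']}$ strictly below the shadow, which is impossible by minimality---is sound, but turning a three-point violation on a full-measure set into an actual competing \emph{measure} is precisely what a monotonicity principle provides; the paper avoids this delicate step by first showing that (4) implies optimality for all auxiliary costs $c_{p,q}(u,x,y)=\I_{u\le p}\,|y-q|$ (a one-line computation using the definition of the shadow) and then invoking the monotonicity principle of \cite{BeGr14} to obtain (3).
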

We add some comments to this result:
\begin{itemize}
\item To make sense of the last point, note that if $\mu'\leq \mu$ and $\mu\leqc\nu$, then the set $\{\nu': \mu'\leqc \nu'\text{ and }\nu'\leq \nu\}$ is non-empty and has a smallest element $S^\nu(\mu')$ wrt\ $\leqc$, the \emph{shadow} of $\mu'$ onto the measure $\nu$ (cf.\ \cite[Lemma 4.6]{BeJu16} / Definition \ref{def:shadow} below). Intuitively speaking, among all measures $\nu'\leq \nu$ which are larger than $\mu'$ in convex order, $S^\nu(\mu')$ is the most concentrated one.
\item 
We will see below in Proposition \ref{Monotone2Barrier} that $\hat \pi \in \hM(\hat \mu, \nu)$ implies in the setting of (2) that the martingale $(B_t^\tau)_{t\geq 0}$ is uniformly integrable. 

\item In (1) the cost $c:(u,x,y)\mapsto (1-u)\sqrt{1+y^2}$ can be replaced by any positive $c(u,x,y)=\varphi(u)\psi(y)$ where $\varphi\geq 0$ is strictly decreasing and $\psi\geq 0$ is strictly convex and the minimum over $\hM(\hat \mu, \nu)$ is finite. More generally the same conclusions holds for a non-negative $c$ with $c_{uyy}<0$ in a weak sense. Alternative assumptions to  $c\geq0$ are that  $\int |\varphi| \,\dd\lambda,\int |\psi(y)|\, \dd \nu < \infty$
or that $c(x,y)\geq A+Bx+Cy$. 

\end{itemize}


We call the unique element of $\hM(\hat\mu,\nu)$ characterized in Theorem \ref{MainTheorem} the \emph{lifted shadow coupling} (with lift $\hat \mu$). 
Its projection onto the two last coordinates is an element $\pi$ of $\M(\mu,\nu)$ that we call \emph{shadow coupling} of $\mu$ and $\nu$ associated to the lift $\hat\mu$. Note that $\pi=\pi_{[0,1]}$ in the terminology introduced in Section 1.1. 

If the lift $\hat\mu$ is concentrated on the graph of a 1-1 function $T:[0,1]\to \R$ there is an obvious correspondence between elements of $\M(\mu,\nu)$ and $\hM(\hat\mu,\nu)$. In particular the optimality property stated in  Theorem \ref{MainTheorem} (1) then translates to an optimality property for the martingale version of the transport problem; early papers to investigate such problems include \cite{HoNe12,  BeHePe12, GaHeTo12, DoSo12, BoNu13, HoKl13, CaLaMa14, BeCoHu14, BeNuTo15}. 
For general lifts, the shadow coupling  does not exhibit particular optimality properties for the martingale transport problem.   However, it is characterized by a general optimality problem in the sense of Gozlan et al. \cite{GoRoSaTe14}. We shall discuss this in Section 5 below.

Canonical choices of lifts lead to canonical martingale couplings of shadow type. We shall be particularly interested in the cases where $\hat \mu$ is either the quantile or the product coupling of $\Lg$ and $\mu$:

\begin{itemize}
\item The quantile coupling $\Lg$ and $\mu$ is the unique coupling $\hat\mu$ whose support is an increasing function (which then is of course the quantile function of $\mu$). Considering the corresponding lifted shadow coupling in $\hM(\hat \mu, \nu)$, we recover the \emph{left-curtain coupling} introduced in \cite{BeJu16}. We shall henceforth denote this coupling by $\pi_\lc$. 

Notably most of the results established for $\pi_\lc$ in \cite{BeJu16} are a particular consequence of Theorem \ref{MainTheorem}
 (cf.\ Remark \ref{FamousCoro}). 

\item The other shadow coupling we will be particularly interested in  is our new \emph{sunset coupling} $\pi_\sun$. It is based on the product lift $\hat{\mu}=\Lg\otimes \mu$, i.e.\ the independent coupling of $\Lg$ and $\mu$. \end{itemize}

Note that the sources of the above martingale couplings are the two most natural coupling methods for elements in the space without constraint $\Pi(\mu,\nu)$. Looking at the measure $\mu$ as the hypograph of its unit density function, we note that the curves $(\mu_{[0,u]})_{u\in[0,1]}$ and $(\mu_u)_{u\in[0,1]}$ reminds a curtain closed from the left in the first case and from the bottom in the second case. This motivates the names \emph{curtain coupling} and \emph{sunset coupling}. The lifted versions are naturally denoted by $\hat\pi_\lc,\,\hat\pi_\sun$ and called \emph{lifted curtain coupling}, \emph{lifted sunset coupling} respectively.

\subsection{Kellerer's theorem and the sunset coupling}\label{KelSun}

Concerning the continuous setting, 

Kellerer's Theorem \cite{Ke73} states that if a family of measures $(\mu_t)_{t\in \R_+}$ satisfies 
$$s\leq t\Rightarrow \mu_s\leqc \mu_t,$$
there exists a martingale $(X_t)_{t\in \R_+}$ with $\law(X_t)=\mu_t$ for every $t$. The martingale can be supposed \emph{Markovian} and this is, as far as we are concerned, the most spectacular achievement of this theorem. In contemporary terms (see \cite{HPRY}),  $(\mu_t)_{t\in \R_+}$ is called a \emph{peacock} and $(X_t)_{t\in \R_+}$ is a Markovian martingale \emph{associated} to this peacock. 

To our best knowledge, all proofs of Kellerer's theorem  are based on approximation arguments using sequences of Markov processes. Here the
 obstacle is that the Markov-property is {\emph{not}} preserved when passing to the limit. The key insight of Kellerer was to consider   Lipschitz-Markov processes, that is Markov-processes 
 whose transition kernels have the following \emph{Lipschitz}-property: a kernel $P: x\mapsto \pi_x $ is called Lipschitz (or more precisely $1$-Lipschitz) if  $W(\pi_x,\pi_{x'})\leq |x-x'|$ for all $x,x'$ (cf.\ \eqref{kanto} for the precise definition of the Wasserstein-$1$ distance). It is then not difficult to see that the property of being a Lipschitz-Markov process \emph{is} preserved  when passing to the limit in the sense of finite-dimensional distributions.
 
 Martingale transport plans can be seen as a one step martingales and it is possible to compose several of them for defining a discrete Markovian martingale. The main technical step in Kellerer's proof is therefore to show that given measures $\mu, \nu$ in convex order there exists a martingale transport plan which has the Lipschitz property. 
  
 Let us add that Lipschitz kernels for measures in convex orders do not exist in higher dimensions $d\geq 2$ (see for instance \cite{Ju_seminaire}). As Lipschitz kernels and their avatars are the only known methods for proving the Kellerer theorem, still to our best knowledge, it is an open problem whether this theorem holds in dimensions greater than or equal to two. 
 
 While non of the various extremal martingale couplings constructed in \cite{HoNe12, BeJu16, HoKl13, CaLaMa14, St14} has the Lipschitz property, we shall see that the sunset coupling has the Lipschitz-property. Moreover, it connects to Kellerer's original proof (in \cite{Ke72}) of the existence of Lipschitz kernels which we now describe:


In Kellerer's  terminology (\cite{Ke72, Ke73}), 
 martingale transport plans $\pi$ appear as pairs consisting  of an initial measure $\mu$ together with a transition kernel $P:x\mapsto \pi_x$, satisfying $\int y\,\dd \pi_x(y)=x$ and $\mu P=\nu$. Slightly abusing notation, we will occasionally  identify martingale transport plans with their kernels. 
The strategy of Kellerer in order to find a Lipschitz kernel is to use Choquet's theory for describing $\nu$ as a combination of extreme measures of $E(\mu)$, the set of measures greater than $\mu$ in the convex order. He proves that if $\omega$ is an extreme element, the set $\M(\mu, \omega)$ consists only of a single element $\pi$ and this element $\pi$ has the Lipschitz property (see Proposition \ref{unique}).
Using that the Lipschitz-property is preserved under convex combinations, Kellerer then obtains that $\M(\mu,\nu)$ contains some Lipschitz kernel.


Let us now discuss how $\pi_\sun\in\M(\mu,\nu)$ compares to Kellerer's proof of the existence of Lipschitz kernels. In fact according to the barrier characterization in Theorem \ref{MainTheorem}, we have
$$\pi_\sun=\int_0^1\hat\pi_u \,\dd u.$$
 Here $\hat\pi_u=\law(B_0,B_\tau\mid\,U=u)$ where $(B_t)$ is a Brownian motion with starting distribution $\mu$, and $\tau$ (conditioned on $U=u$) is the hitting time of the barrier's vertical section $R_u:=\{y\in\R:\,(u,y)\in R\}$, see Figure \ref{barrier}. We denote  by $\mu P_{R_u}$ the hitting law $\law(B_\tau\mid\,U=u)$ and call the corresponding kernel $P_{R_u}$ the \emph{hitting projection} onto $R_u$ (see also Definition \ref{def:hitting}). We have $\nu=\int_0^1 \mu P_{R_u}\,\dd u$. 
 
\begin{figure}[ht]
\begin{center}
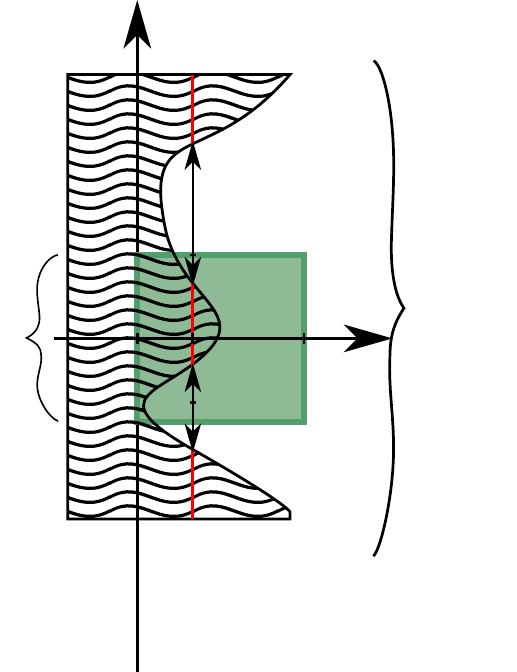
\caption{Hitting the barrier}\label{barrier} 
\end{center}
\end{figure}
 
  Kellerer established  that the measures $\omega=\mu P_T$ obtained through a hitting projection are exactly the extreme elements of $E(\mu)$ (provided one obtains a \emph{martingale} transport plan, that is $\spt(\mu)\subset[\inf(T),\sup(T)])$. Recalling the argument two paragraphs above, $\hat{\pi}_u=\mu(\id\times P_{R_u})$ is the unique element of $\M(\mu,\mu P_{R_u})$, the kernels $P_{R_u}$ are Lipschitz kernels, and in particular, the sunset coupling has the Lipschitz property. 
  

 We stress that the uniqueness in Theorem \ref{MainTheorem} permits us to gurantee that there exists a \emph{unique} Choquet representation through a family $(R_u)_{u\in[0,1]}$ which is \emph{ordered} in the sense that $s\leq t$ implies that $R_t\supset R_u$. Indeed, the definition of barrier tells us that $R_v\subset R_u$ if $u\leq v$ so that such a representation exists. If some Choquet representation of $\nu$ is given by measures $\nu_u$ obtained using the  hitting projection on sets $R_u$ which is ordered in the above sense, then these sets consitute a barrier and based on the uniqueness assertion in Theorem \ref{MainTheorem}, the family $(\nu_u)_{u\in[0,1]}$ with $\nu=\mu P_{R_u}$ is the one associated to the sunset coupling. 


Let us summarize this in a theorem. 

\begin{them}[Sunset coupling and Lipschitz kernel]\label{Kellerer-Choquet}
Let $\mu$ and $\nu\in \p$ satisfy $\mu\leqc \nu$. Then there exists a probability measure $\chi$ on $\mathcal{I}=\{T\subset \R\,:\,T\text{ is closed},\,\inf T=-\infty,\,\sup T=+\infty\}$ which represents $\nu$ in the sense that   
\begin{align}\label{ChoquetSense}\textstyle \nu(A)=\int_\mathcal{I} \mu P_T(A)\, \dd \chi(T)=:\mu P_\chi(A)=\mu_\chi(A).\end{align}
Moreover $P_\chi$ is a Lipschitz kernel and $\mu(\id\times P_\chi)\in \M(\mu,\nu)$ is a Lipschitz martingale plan.

A possible choice of $\chi$ is the uniform measure on $(R_u)_{u\in[0,1]}$ where $R_u$ is a decreasing family for $\subset$. Moreover if $\chi'$ is another measure associated to a non-increasing family $(R'_u)_{u\in[0,1]}$ with $\nu=\mu P_{\chi'}$ and $\mu(\id\times P_{\chi'})\in \M(\mu,\nu)$, then $\mu(\id\times P_\chi)=\mu(\id\times P_{\chi'})$. (This measure is the sunset coupling of $\mu$ and $\nu$, that is the shadow coupling of $\mu$ with lift $\lambda\times \mu$ and $\nu$.)
\end{them}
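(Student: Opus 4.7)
The plan is to construct $\chi$ directly from the sunset coupling. Start by applying Theorem~\ref{MainTheorem} to the product lift $\hat\mu=\lambda\otimes\mu$; this yields $\hat\pi_\sun\in\hM(\hat\mu,\nu)$ and, through characterization (2), a left barrier $R\subset [0,1]\times\R$ such that $\hat\pi_\sun=\law(U,B_0,B_\tau)$ with $\tau$ the hitting time of $(U,B_t)$ into $R$. Define the vertical sections $R_u=\{y\in\R:(u,y)\in R\}$. Because $R$ is a left barrier, $u\leq v$ implies $R_v\subseteq R_u$, so $(R_u)_{u\in [0,1]}$ is non-increasing in $u$. Conditioning on $U=u$ (which is independent of $B_0\sim\mu$ by the product structure of $\hat\mu$), one gets $\law(B_\tau\mid U=u)=\mu P_{R_u}$, and disintegrating $\nu$ with respect to $U$ yields the desired Choquet representation $\nu=\int_0^1 \mu P_{R_u}\,\dd u$. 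Declaring $\chi$ to be the push-forward of $\lambda$ under $u\mapsto R_u$ (after first replacing, if necessary, each $R_u$ by its closure and then absorbing $\pm\infty$ into the endpoints using the martingale condition $\spt\mu\subseteq[\inf R_u,\sup R_u]$, which is forced by $\hat\pi_\sun$ being a martingale transport plan as recalled in the comment after Theorem~\ref{MainTheorem}) delivers (\ref{ChoquetSense}).

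To obtain the Lipschitz property of $P_\chi$, invoke the result of Kellerer (stated earlier in the introduction and proved as Proposition~\ref{unique}) that when $T\in\mathcal{I}$ the measure $\omega=\mu P_T$ is extreme in $E(\mu)$ and the unique element of $\M(\mu,\omega)$ has Lipschitz kernel $P_T$. Then $P_\chi=\int P_{R_u}\,\dd u$ is a $\lambda$-average of $1$-Lipschitz kernels; since the Wasserstein-$1$ distance is jointly convex in its arguments, the average kernel is again $1$-Lipschitz. The associated transport plan $\mu(\id\times P_\chi)$ clearly has marginals $\mu$ and $\nu$ and inherits the martingale property from each $\mu(\id\times P_{R_u})$, so it belongs to $\M(\mu,\nu)$; by construction this transport plan is precisely the projection of $\hat\pi_\sun$ onto the last two coordinates, i.e.\ the sunset coupling $\pi_\sun$.

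For uniqueness, suppose $\chi'$ comes from another non-increasing family $(R'_u)_{u\in[0,1]}$ with $\nu=\mu P_{\chi'}$ and $\mu(\id\times P_{\chi'})\in\M(\mu,\nu)$. Form the candidate lifted transport plan $\hat\pi'\in\p([0,1]\times\R^2)$ by
\[
\hat\pi'(\dd u\,\dd x\,\dd y) = \lambda(\dd u)\,\mu(\dd x)\,P_{R'_u}(x,\dd y).
\]
Its first two marginals give $\lambda\otimes\mu=\hat\mu$, and its second-and-third marginal is $\mu(\id\times P_{\chi'})\in\M(\mu,\nu)$, so $\hat\pi'\in\hM(\hat\mu,\nu)$. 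The set $R'=\bigcup_{u\in[0,1]}\{u\}\times R'_u$ is a left barrier because the $R'_u$ are non-increasing in $u$, and by construction $\hat\pi'=\law(U,B_0,B_{\tau'})$ where $\tau'$ is the hitting time into $R'$. Thus $\hat\pi'$ satisfies property (2) of Theorem~\ref{MainTheorem}, and the uniqueness asserted there forces $\hat\pi'=\hat\pi_\sun$; projecting onto the last two coordinates gives $\mu(\id\times P_{\chi'})=\mu(\id\times P_\chi)=\pi_\sun$.

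The step I expect to be the main obstacle is the bookkeeping around the space $\mathcal{I}$: to write $\chi$ as a genuine probability on closed subsets of $\R$ one must arrange that the vertical sections of the barrier $R$ produced by Theorem~\ref{MainTheorem}(2) are closed and satisfy $\inf R_u=-\infty$, $\sup R_u=+\infty$. Replacing $R_u$ by its closure does not change the hitting distribution of Brownian motion, and the fact that each $\mu P_{R_u}$ must be in convex order above $\mu$ (forced by $\hat\pi_\sun\in\hM$) together with the unbounded support of Brownian motion gives the required unboundedness of the sections; measurability of $u\mapsto R_u$ in the Hausdorff-type topology used on $\mathcal{I}$ is then routine from the Borel character of $R$.
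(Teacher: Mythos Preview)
Your proposal is correct and follows essentially the same route the paper takes: the paper derives Theorem~\ref{Kellerer-Choquet} from the barrier characterization in Theorem~\ref{MainTheorem} applied to the product lift, reads off the decreasing sections $R_u$, obtains $\nu=\int_0^1 \mu P_{R_u}\,\dd u$, and appeals to the uniqueness in Theorem~\ref{MainTheorem}(2) for the ordered-family uniqueness statement---exactly your plan. One small imprecision: you cite Proposition~\ref{unique} for the Lipschitz property of $P_T$, but that proposition only gives uniqueness of the martingale transport in $\M(\mu,\mu P_T)$; the $1$-Lipschitz property of the hitting kernel $P_T$ is a separate (elementary) fact the paper attributes to Kellerer in Subsection~\ref{KelSun} without reproving it.
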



Another proof that $\M(\mu,\nu)$ contains at least one Lipschitz kernel is given by solutions of the Skorohod embedding problem. Root's embedding is considered in \cite{BeHuSt15} and Hobson's embedding in \cite[Lemma 3.3]{Low2}. To our best knowledge, this is after Kellerer's original proof the second known type of proof. Theorem \ref{MainTheorem} finally spans a bridge between the two methods because it presents a kind of lifted Skorohod embedding.

%
%
%

\section{Preliminaries}

\subsection{Definitions, notations on the martingale transport problem}

We consider the space $\m$ of positive measures on $\R$ with finite first moments. The subspace of probability measures with finite expectations is denoted by $\p$. In higher dimensions we denote the corresponding spaces by $\m(\R^d)$ and $\p(\R^d)$. For $\mu,\,\nu\in\m$, the Wasserstein-$1$ distance is defined by
\begin{align}\label{kanto}
W(\mu,\nu)=\sup_{f\in\mathrm{Lip}(1)}\left|\int\,f\,\dd\mu-\int\,f\,\dd\nu\right|
\end{align} and
endows $(\p,W)$ with $\mathcal{T}_1$, the usual topology for probability measures with finite first moments. In the definition, the supremum is taken over all $1$-Lipschitzian functions $f:\R\to\R$. We also consider  $W$ (with the same definition) on the subspace $m\p=\{\mu\in\m|\,\mu(\R)=m\}\subset\m$ of measures of mass $m$.

According to the Kantorovich duality theorem, an alternative definition in the case $\mu,\nu\in\p$ is
\begin{align}\label{wass}
\inf_{(\Omega,X,Y)}\E(|Y-X|)
\end{align}
where $X,\,Y:(\Omega,\mathcal{F},\P)\to\R$ are random variables with marginals $\mu$ and $\nu$. The infimum is taken among all joint laws $(X,Y)$, the probability space $(\Omega,\mathcal{F},\P)$ being part of the minimisation problem. Note that without loss of generality $(\Omega,\mathcal{F},\P)$ can be assumed to be $([0,1],\mathcal{B},\lambda)$ where $\lambda$ is the Lebesgue measure and $\mathcal{B}$ the $\sigma$-algebra of Borel sets on $[0,1]$.

A special choice of 1-Lipschitzian function is the function $f_t:x\in\R\to |x-t|\in \R$. Therefore if $\mu_n\to\mu$ in $\m$, the sequence of functions $u_{\mu_n}:t\mapsto \int f_t(x)\, \dd\mu_n(x)$  converges to $u_\mu$ pointwise. The converse statement also holds if all the measures have the same mass and barycenter (see \cite[Proposition 2.3]{HiRo12} or  \cite[Proposition 4.2]{BeJu16}). For $\mu\in\m$, the function $u_\mu$ is usually called the \emph{potential function} of $\mu$. 

\subsection{Bijection between curves, primitive curves, and lifted measures}

We elaborate on the equivalent avatars of the lifted measures introduced in Subsection \ref{subsec:main_result}. In short,  we are representing the same mathematical object in three ways: we consider the measure $\hat{\theta}$,  the almost surely defined disintegration  $(\theta_u)_{u\in[0,1]}$, and the primitive curve $(\theta_{[0,u]})_{u\in[0,1]}$. We first recall the integrability conditions. The lifted measure $\hat{\theta}$ is a probability measure on $[0,1]\times \R^d$ (where $d=1$ or $d=2$) such that 
$\hat{\theta}(\hat \rho)<+\infty$ is finite where $\rho:x\mapsto \|x\|_{\R^d}$ and $\hat{\rho}(u,x)=\rho(x)$. This integrability condition corresponds to $\theta_{[0,1]}(\rho)<+\infty$ for the primitive curve $(\theta_{[0,u]})_{u\in[0,1]}$ and $\int_0^1 \theta_u(\rho)\,\dd u<+\infty$ for $(\theta_{u})_{u\in[0,1]}$. The marginal condition asserts that $\hat\theta\in \Pi(\lambda,\theta)$ for some $\theta\in\p(\R^d)$. 
In terms of the primitive curve this can be expressed by asserting that $\theta_{[0,1]}=\theta$ and $\theta_{[0,u]}(\R)=u$.
The equivalent condition on $(\theta_u)_{u\in[0,1]}$ is that $\lambda$-almost surely $\theta_u\in\p$ and $\theta=\int_0^1 \theta_u\,\dd u$. Note that from a probabilistic point of view, if $(U,X)$ is a random vector of law $\hat{\theta}$ with $U\sim \lambda$, the other representations are given by $(u\times\law(X|U\leq u))_{u\in[0,1]}$ and $(\law(X|\,U=u))_{u\in[0,1]}$. Finally the object that we will Ultimately be most interested in is not the lift $\hat{\theta}$ but $\theta:=\theta_{[0,1]}=\law(X)$.

In what follows we explain that the derivative of the primitive  curve $(\theta_{[0,u]})_{u\in [0,1]}$ can be considered with respect to $\mathcal{T}_1$, the weak topology wrt to continuous functions which have at most linear growth.
Let us start with $\hat{\theta}\in \Pi(\lambda,\theta)$. We disintegrate the measure with respect to the first marginal and obtain an a.s.\ uniquely determined family $(\theta_u)_{u\in[0,1]}$ such that for almost every $u$, $\theta_u\in \p(\R^d)$. (We can assume that the measure is zero for the other parameters.)
Define $\theta_{[0,u]}$  for $u\in[0,1]$ by
$$\textstyle \theta_{[0,u]}(A)=\hat{\theta}([0,u]\times A)=\int_0^u\theta_{s}(A)\,\dd s$$
for  $A\subset \R^d$  Borel. Given a  function $f:x\in\R^d\to \R$ with $f(x)/(1+\|x\|)$ bounded, the function
$s\mapsto \theta_{s}(f)$ is measurable and in $L^1([0,1])$. Hence at almost every time $u\in[0,1]$ the  function $t\mapsto \theta_{[0,u]}(f)=\int_0^u\theta_{t}(f)\,\dd t$ is differentiable with derivative $\theta_u(f)$. It is important that the set $L\subset [0,1]$ of times at which the derivative exists for all $f$ is a Borel set of full measure, as we will verify in the next paragraph. Before establishing this claim, note that this permits us to define a canonical disintegration $(\tilde{\theta}_u)_{u\in[0,1]}$:  we define the measure $\tilde{\theta}_u$  as the derivative if $u\in L$, and zero otherwise.

We turn now to the claim: Let $\mathcal{X}$ be a countable set of functions which is dense in the space $\mathcal{C}_c(\R^d)$ of functions with compact support and let $\mathcal{X}_+$ be $\mathcal{X}\cup\{\rho\}$ where $\rho(x)=\|x\|_{\R^d}$. Let $L\subset [0,1]$ be the set such that at any time $u\in L$, $\theta_u$ is a probability measure and $u\mapsto \theta_{[0,u]}(f)$ has derivative $\theta_u(f)$ for any $f\in\mathcal{X}_+$ and note that $L$ has full mass. Then, as an increment $h$ goes to zero the measure $h^{-1}(\theta_{[0,u+h]}-\theta_{[0,u]})$ weakly converge to $\theta_u$ and as $\rho \in\mathcal{T}_1$ convergences holds also  in $\mathcal{T}_1$,  cf.\ \cite[Theorem 7.12]{Vi2}. Thus $L$ is a set of differentiation for any function with finite first moment.

\subsection{General description of the construction}\label{GeneDesc}

In what follows we shortly explain the canonical scheme to define $\hat{\pi}\in \hM(\hat{\mu},\nu)$ where the marginals $\hat{\mu}\in \Pi(\lambda,\mu)$ and $\nu\in \p$ are given. Recall that the resulting coupling $\pi=\pi_{[0,1]}=(\proj_{x,y})_{\#}\hat{\pi}$ of $\mu=\mu_{[0,1]}$ and $\nu$ fits more naturally to  the theory of optimal transportation than the lifted coupling $\hat{\pi}$.

Represent $\hat{\mu}$ in the form $(\mu_{[0,u]})_{u\in[0,1]}$. The first canonical operation, called \emph{shadow projection} on $\nu$, consists in building the curve $(\nu_{[0,u]})_{u\in[0,1]}$ from it (see Defintion \ref{def:shadow}). Hence the construction is complete if on a set $L\subset [0,1]$ of differentiation (of full measure) of $(\mu_{[0,u]})_u$ and $(\nu_{[0,u]})_u$ we know how to canonically choose a joint law $\pi_u$ of $\mu_u$ and $\nu_u$. In our situation, the martingale constraint and the fact that we used the shadow projection will make this choice uniquely determined and related to Kellerer's  \emph{hitting projection} (see Definition \ref{def:hitting}).  We will thus obtain $(\pi_u)_{u\in[0,1]}$ and equivalently $(\pi_{[0,u]})_{u\in[0,1]}$ and $\hat{\pi}$. This construction will be carried out in detail in the proof of Theorem \ref{thm:construction}.

\subsection{More details on Kellerer's approach}\label{sec:precisions}
In relation with Theorem \ref{Kellerer-Choquet} let us present in a more formal way the  Choquet representation given in  \cite{Ke72, Ke73}.

We denote by $\mathcal{F}(\R)$ the space of closed subsets of $\R$, and $\mathcal{I}$ the subspace of those elements $T\in\mathcal{F}(\R)$ such that $\sup T=-\inf T=+\infty$. The space $\mathcal{F}(\R)$ is endowed with the coarsest topology for which the mappings $T\in\mathcal{F}(\R)\mapsto d(x,T)\in [0,\infty]$ are continuous. By \cite[Satz 13]{Ke73} this topology is metrisable and compact, and by \cite[Satz 14]{Ke73} $\mathcal{I}$ is a $G_\delta$ subset of $\mathcal{F}(\R)$. 

\begin{defi}[Hitting projection of measure in/to a set]\label{def:hitting}
Let $T$ be an element of $\mathcal{I}$. For every $x\in T$, let $x_T^-=\sup(T\cap(-\infty,x])$ and $x_T^+=\inf(T\cap[x,+\infty))$. We define now the Kellerer dilation (\cite[Definition 16]{Ke73})
\begin{align*}P_T(x,\cdot)=
\begin{cases}
\delta_x&\text{if }x\in T;\\
(x_T^+-x_T^-)^{-1}[(x_T^+-x)\delta_{x_T^-}+(x-x_T^-)\delta_{x_T^+}]&\text{otherwise.}
\end{cases}
\end{align*}
Hence if $\mu\in \p$, the hitting projection of $\mu$ in $T$ is $\nu=\mu P_T$ and the hitting coupling of $\mu$ and $\nu$ 
is given by $\pi (A\times B) =\int_A P_T(x,B)\, \dd\mu(x)$; we shall abbreviate this by $\pi=\mu(\id\times P_T)$. 
\end{defi}
Note that if $T$ is not an element of $\mathcal{I}$ but $\spt(\mu)\subset [\inf T,\sup T]$, the hitting projection of $\mu$ still makes sense because $x^-_T$ and $x^+_T$ are finite and thus $\pi$ is a martingale transport plan. It is easy to replace $T$ by an element $T^*\in \mathcal{I}$ such that the resulting hitting couplings are equal, simply  define $T^*=]-\infty,\inf T] \cup \bar{T} \cup [\sup(T),+\infty[$. 
  Note that if $\mu\leqc\nu$, this remark applies to the set $T=\spt(\nu)$ since  $\spt(\mu)\subset [\inf T,\sup T]$ is satisfied.

\begin{pro}\label{unique}
Let $\mu$ be an element of $\p$ and $T\in \mathcal{F}(\R)$ satisfy $\spt (\mu)\subset [\inf T, \sup T]$. Let $\nu$ be $\mu P_T$. Then, the hitting coupling $\mu(\id\times P_T)$ is the unique element of $\M(\mu,\nu)$.
\end{pro}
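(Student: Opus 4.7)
My strategy is to reduce uniqueness to the following pointwise lemma: for every $x\in[\inf T,\sup T]$, the Kellerer dilation $P_T(x,\cdot)$ is the (unique) smallest element in the convex order among all probability measures $\eta$ supported on $T$ with barycenter $x$. Assuming this, let $\pi'\in\M(\mu,\nu)$ and let $(\pi'_x)_x$ be its disintegration with respect to $\mu$. Since $\spt(\nu)\subseteq T$ and $\int\pi'_x\,\dd\mu(x)=\nu$, we have $\spt(\pi'_x)\subseteq T$ for $\mu$-a.e.\ $x$; combined with the martingale condition $\int y\,\dd\pi'_x(y)=x$, the lemma yields $\pi'_x\geqc P_T(x,\cdot)$ for $\mu$-a.e.\ $x$, and hence $u_{\pi'_x}(t)\geq u_{P_T(x,\cdot)}(t)$ for every $t$. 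Integrating against $\mu(\dd x)$ gives
$$u_\nu(t)=\int u_{\pi'_x}(t)\,\dd\mu(x)\geq \int u_{P_T(x,\cdot)}(t)\,\dd\mu(x)=u_{\mu P_T}(t)=u_\nu(t),$$
so the inequality is an equality for every $t$. Thus, for each fixed $t$, one has $u_{\pi'_x}(t)=u_{P_T(x,\cdot)}(t)$ for $\mu$-a.e.\ $x$; picking a countable dense set of test points and invoking continuity of potential functions, the equality holds for $\mu$-a.e.\ $x$ simultaneously in $t$. The injectivity of the potential-function map on measures with equal mass and barycenter (see Section~2.1) then forces $\pi'_x=P_T(x,\cdot)$ for $\mu$-a.e.\ $x$, i.e.\ $\pi'=\mu(\id\times P_T)$.

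For the lemma, the case $x\in T$ is immediate since $\delta_x$ is already a minimum. For $x\notin T$, the hypothesis $\spt(\mu)\subseteq[\inf T,\sup T]$ ensures that the maximal gap $(a,b)$ of $T$ containing $x$ is bounded with both endpoints $a,b\in T$. Fix a competitor $\eta$ supported on $T$ with barycenter $x$. A direct computation shows that $u_{P_T(x,\cdot)}(t)=|x-t|$ for $t\leq a$ and for $t\geq b$, while on $[a,b]$ the function $u_{P_T(x,\cdot)}$ is the affine interpolation between the endpoint values $x-a$ and $b-x$. Jensen's inequality applied to the convex map $y\mapsto|y-t|$ immediately gives $u_\eta(t)\geq|x-t|=u_{P_T(x,\cdot)}(t)$ for $t\notin(a,b)$. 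Since $\eta$ charges no point of the gap $(a,b)$, the function $u_\eta$ is affine on $[a,b]$ as well; combined with the endpoint bounds $u_\eta(a)\geq x-a$ and $u_\eta(b)\geq b-x$, this yields $u_\eta\geq u_{P_T(x,\cdot)}$ on $[a,b]$ too. The potential-function characterization of $\leqc$ then concludes $\eta\geqc P_T(x,\cdot)$.

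The main obstacle is the lemma on the gap interval: at an interior point $t\in(a,b)$, Jensen alone only yields $u_\eta(t)\geq|x-t|$, which lies strictly \emph{below} the affine target $u_{P_T(x,\cdot)}(t)$, so the argument crucially uses the affinity of $u_\eta$ on $[a,b]$ (itself a consequence of $\eta$ avoiding the open gap). A secondary technical point is the passage from ``for each $t$, $\mu$-a.e.\ equality in $x$'' to ``$\mu$-a.e.\ in $x$, equality for all $t$ simultaneously''; this is routine via a countable dense set of test values and the continuity of potential functions.
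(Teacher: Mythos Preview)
Your proof is correct and takes a genuinely different route from the paper's. The paper invokes the irreducible decomposition of martingale couplings \cite[Theorem 8.4]{BeJu16}: one writes $\{u_\mu<u_\nu\}=\bigcup_n I_n$ and decomposes any $\pi\in\M(\mu,\nu)$ as $(\id\times\id)_\#(\mu\wedge\nu)+\sum_n\pi_n$ with $\pi_n\in\M(\mu_n,\nu_n)$; in the present situation each $\nu_n$ is supported on the two endpoints of $\bar I_n$ (equivalently, $u_\nu$ is affine on $\bar I_n$), and a two-point target forces $\pi_n$ uniquely. Your argument bypasses this structural theorem entirely and instead establishes directly, via potential functions, that $P_T(x,\cdot)$ is the convex-order minimum among probabilities on $T$ with barycenter $x$, then squeezes any competitor $\pi'$ by integrating the pointwise inequality $u_{\pi'_x}\geq u_{P_T(x,\cdot)}$ against $\mu$. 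The two arguments ultimately exploit the same geometric fact---affinity of the potential over each gap of $T$---but yours is more self-contained (it needs nothing beyond Section~2.1), while the paper's version situates the result within the general irreducible-component framework and makes the ``two-point'' structure of each piece explicit. One small wording issue: in your lemma the boundedness of the gap $(a,b)$ follows from $x\in[\inf T,\sup T]$, not from $\spt(\mu)\subseteq[\inf T,\sup T]$ per se; the latter is what guarantees that the lemma applies at $\mu$-a.e.\ $x$.
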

\begin{proof}
This is \cite[Satz 25]{Ke73}. Alternatively, one may consider the decomposition of $(\mu,\nu)$ into irreducible components described in \cite[Theorem 8.4]{BeJu16}.
 This theorem specifies two canonical series $\mu=\mu\wedge \nu +\sum \mu_n$ and $\nu=\mu\wedge \nu + \sum \nu_n$ such that any $\pi\in \M(\mu,\nu)$ can be written as $\pi=(\id\times \id)_\#(\mu\wedge \nu)+\sum \pi_n$ with $\pi_n\in \M(\mu_n,\nu_n)$ for every $n$. This decomposition is based on the potential functions $u_\mu$ and $u_\nu$: the set $\{u_\mu< u_\nu\}$ is open and hence consists of a (finite or countable) union of open intervals $I_n$. Then $\mu_n= \mu|_{ I_n}$ whereas $\nu$ is concentrated on $\bar I_n$.  
In the present situation, $u_\nu$ is affine on each interval $\bar I_n$. This implies that necessarily $\pi_n=\mu_n(\id\times P_{\bar I_n})=\mu_n(\id\times P_T)$ and hence $\pi=\mu(\id \times P_T)$.
\end{proof}

 Let $\mathcal{U}$ be the space of probability measures $\chi$ on $\mathcal{I}$ such that $\int_\mathcal{I}d(0,T)\,\dd\chi(T)$ is finite, endowed with the coarsest topology such that $\chi\mapsto \int_\mathcal{I} h(T)\,\dd \chi(T)$ is continuous for every continuous functions $h:\mathcal{I}\mapsto \R$ with $\sup_T h(T)(1+d(0,T))^{-1}<+\infty$. \cite[Satz 18]{Ke73} asserts that $\mathcal{U}$ is metrisable and \cite[Satz 19]{Ke73} that $\mathcal{U}_0=\{\chi\in\mathcal{U}|\,\chi-\text{almost surely }T\supset T_0\}$ is compact. Moreover, according to \cite[Satz 20]{Ke73}, for $\mu\in \p$, the measure $\nu=\mu P_\chi$ has also finite first moments.

Kellerer establishes in \cite[Theorem 1]{Ke73}  that for a given $\mu\in \p$ the extreme points of $\{\nu\in \p:\mu\leqc\nu\}$ exactly are the measures $\mu P_T$. The latter set is not compact but going first through the compact and convex sets $\{\nu\in \p,\,\mu\leqc\nu\leqc \mu_{S}\}$ for sets $S\in \mathcal{I}$, Kellerer is able to derive a  Choquet representation in \cite[Theorem 4]{Ke73}. 

\begin{them}[A Choquet representation established by Kellerer]\label{KC}
Let $\mu$ and $\nu\in \p$ satisfy $\mu\leqc \nu$. Then there exist a probability measure $\chi\in\mathcal{U}$ such that
$$\nu=\int_\mathcal{I} \mu P_T\, \dd \chi(T)=:\mu P_\chi= \mu_\chi.$$
\end{them}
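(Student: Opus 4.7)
The plan is to apply Choquet's integral representation theorem to a suitable compact convex set of measures containing $\nu$. The main inputs, already quoted in Subsection~\ref{sec:precisions} from \cite{Ke73}, are (i) the identification of the extreme points of $\{\omega \in \p : \mu \leqc \omega\}$ with the hitting projections $\mu P_T$, $T \in \mathcal{I}$ (\cite[Theorem~1]{Ke73}), (ii) the compactness of the sets $\{\omega \in \p : \mu \leqc \omega \leqc \mu P_S\}$ for $S \in \mathcal{I}$ (the paragraph preceding \cite[Theorem~4]{Ke73}), and (iii) the metrisability/compactness of $\mathcal{U}_0$ and the finite-first-moment property of barycenters (\cite[Satz~18--20]{Ke73}). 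Since the set $\{\omega : \mu \leqc \omega\}$ itself fails to be compact, a reduction to a compact subset containing $\nu$ is the first substantive step.

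First I would fix $S_0 \in \mathcal{I}$ for which $\nu \leqc \mu P_{S_0}$. A concrete choice is $S_0 = (-\infty,-k] \cup [k,+\infty)$ with $k$ large: the measure $\mu P_{S_0}$ has the same barycenter as $\mu$ but moves all mass of $\mu$ inside $[-k,k]$ out to $\{-k,k\}$, so its potential function $u_{\mu P_{S_0}}$ can be made to dominate $u_\nu$ pointwise by taking $k$ sufficiently large, which by the potential characterisation of the convex order recalled in Subsection~2.1 gives $\nu \leqc \mu P_{S_0}$. Set
\begin{align*}
K := \{\omega \in \p : \mu \leqc \omega \leqc \mu P_{S_0}\},
\end{align*}
so that $\nu \in K$ and $K$ is compact, convex, and metrisable in the topology $\mathcal{T}_1$.

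Next I would identify the extreme points of $K$: any extremal element of $K$ is \emph{a fortiori} extremal in the larger set $\{\omega : \mu \leqc \omega\}$ (a convex decomposition inside $K$ is one inside the larger set), and by \cite[Theorem~1]{Ke73} is therefore of the form $\mu P_T$ with $T \in \mathcal{I}$; conversely, any $\mu P_T \in K$ remains extremal in $K$ for the same reason. With $\mathrm{ext}(K) \subseteq \{\mu P_T : T \in \mathcal{I}\}$ in hand, Choquet's theorem for metrisable compact convex sets yields a probability measure supported on $\mathrm{ext}(K)$ whose barycenter is $\nu$. Pulling this measure back along the continuous map $T \in \mathcal{I} \mapsto \mu P_T \in K$ (continuity follows from the definition of the Kellerer dilation together with the topology on $\mathcal{F}(\R)$ described in Subsection~\ref{sec:precisions}) yields a probability measure $\chi$ on $\mathcal{I}$ with $\nu = \int_{\mathcal{I}} \mu P_T \, \dd\chi(T)$. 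That $\chi \in \mathcal{U}$, i.e.\ $\int d(0,T)\, \dd\chi(T) < \infty$, follows from \cite[Satz~20]{Ke73}, which ensures that the first-moment finiteness of $\nu=\mu P_\chi$ forces the corresponding finiteness on the level of $\chi$.

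The main obstacle is the identification step together with its measurable pull-back: one needs that the Choquet representing measure on $\mathrm{ext}(K)$ can be transported to a Borel probability on $\mathcal{I}$, which requires continuity (and at least a measurable section) of $T \mapsto \mu P_T$, provided by the topological framework of \cite[Satz~13--19]{Ke73}. Once these topological facts are in place, the rest of the argument is standard functional-analytic bookkeeping; the core content really is \cite[Theorem~1]{Ke73} on extreme points, together with the choice of $S_0$ that allows Choquet's theorem to be applied at all.
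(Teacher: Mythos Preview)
Your outline follows the Choquet strategy the paper attributes to Kellerer, but two steps do not go through as written.

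First, the reduction to a single compact slice fails: there is in general no $S_0\in\mathcal{I}$ with $\nu\leqc\mu P_{S_0}$. For any $T\in\mathcal{I}$ and any $t\in T$ one has $u_{\mu P_T}(t)=u_\mu(t)$, since projecting mass from a complementary interval $(a,b)$ onto $\{a,b\}$ leaves $\int|y-t|\,\dd\cdot$ unchanged when $t\notin(a,b)$. Thus $\nu\leqc\mu P_{S_0}$ would force $u_\nu\leq u_\mu$ on $S_0$, and together with $u_\mu\leq u_\nu$ this gives $S_0\subseteq\{u_\mu=u_\nu\}$. If $u_\mu<u_\nu$ everywhere on $\R$ (e.g.\ $\mu,\nu$ centred Gaussians with different variances) no such $S_0$ exists. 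This is exactly why the paper's summary says Kellerer proceeds by ``going first through'' the compact sets $\{\omega:\mu\leqc\omega\leqc\mu P_S\}$: a genuine approximation over $S$ is required, not the choice of one $S_0$ containing $\nu$.

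Second, your identification of $\mathrm{ext}(K)$ reverses an implication. The parenthetical ``a convex decomposition inside $K$ is one inside the larger set'' proves that an extreme point of $E(\mu)=\{\omega:\mu\leqc\omega\}$ which lies in $K$ remains extreme in $K$, not the converse. Extremality in a smaller convex set is a \emph{weaker} condition, so $K$ may a priori have extreme points that fail to be extreme in $E(\mu)$; the inclusion $\mathrm{ext}(K)\subseteq\{\mu P_T:T\in\mathcal{I}\}$ therefore cannot be read off from \cite[Theorem~1]{Ke73} alone and needs its own argument (this is part of what Kellerer does when analysing the bounded sets directly).
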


Let us give some precisions on this result and on Theorem \ref{Kellerer-Choquet}.
\begin{itemize}
\item Theorem \ref{Kellerer-Choquet} improves Theorem \ref{KC} with a uniqueness statement based on the natural order on $\mathcal{I}$. Our proof is independent  of Kellerer's. In particular we do not study what  the extreme elements of $E(\mu)=\{\omega\in \p|\,\mu\leqc \omega\}$ are.
\item As explained in Subsection \ref{KelSun} and using the notation of Subsection \ref{GeneDesc}, for every $u\in [0,1]$ the transport plan $\pi_u$ transfers $\mu_u=\mu$ onto $\nu_u$. We will see in Proposition \ref{central} that this transport is given through the \emph{hitting projection} of $\mu$ onto $\spt(\nu-\nu_{[0,u]})$. This description also provides a canonical choice for $R_u$ in Theorem \ref{Kellerer-Choquet} as well as a canonical barrier in Theorem \ref{MainTheorem} (2)  by setting $R=(]-\infty,0[\times \R)\cup\{(u,y)\in[0,1]\times\R,\, y\in \spt(\nu-\nu_{[0,u]})\}$.

\item In \cite{Ke73}, uniqueness of a measure on the extreme elements of $\{\omega\in\p,\,\mu\leqc \omega\}$ is not claimed and can easily be disproved. Set for instance $\mu=\delta_0$ and $\nu=\frac{\delta_{-2}+\delta_{-1}+\delta_1+\delta_2}{4}$. Taking the uniform probability measure on $T_1=\R\setminus ]-1,1[$ and $T_2=\R\setminus ]-2,2[$ on the one hand and an atomic measure on $\R\setminus ]-1,2[$, $\R\setminus ]-2,1[$ and $\R\setminus ]-2,2[$ we obtain two different representations of $\nu$. Another, more trivial, type of non-uniqueness  can be observed: In the previous example $T_1$ can also be replaced for instance by $\R\setminus (]-1,1[\cup]-20,-15[)$ providing the same measure on $\p$ but another measure on $\mathcal{I}$. 


\item From Theorem \ref{Kellerer-Choquet} it is not evident that $\chi$ satisfies the integrability condition appearing in the definition of $\mathcal{U}$. However according to Lemma 15 in \cite{Ke73}, this holds if and only if $\nu=\mu_\chi$ has finite first moments, i.e. is an element of $\p$. But this is one hypotheses in Theorem \ref{Kellerer-Choquet}. Hence, this theorem completely generalizes Theorem \ref{KC}, also with respect to $\chi\in \mathcal{U}$.
\end{itemize}



\subsection{Order relations and shadows}\label{sec:shadow}

On $\m$ we write $\mu\leqcp \nu$ if and only if there exists $\eta\in \m$ with $\mu\leqc \eta$ and $\eta\leqp \nu$. Here $\leqp$ means $\eta(A)\leq \nu(A)$ for every Borel set $A$. The order $\leqcp$ can also be characterized by asserting $\mu(f)\leq \nu(f)$ for every convex positive function $f$. We also introduce the stochastic order $\mu\leqs \nu$ that holds if $\mu(f)\leq \nu(f)$ for every integrable increasing function. This is equivalent to $G_\mu\leq G_\nu$ where $G_\mu$ denotes the unique increasing left-continuous function with $(G_\mu)_\#\lambda=\mu$, i.e.\ the quantile function. See \cite{Ju_ihp} for more details in the context of martingale optimal transport.

\begin{defi}[Definition of the shadow]\label{def:shadow}
If $\mu\leqcp\nu$ there exists a unique measure $\eta$ such that
\begin{itemize}
\item $\mu\leqc \eta$
\item $\eta\leqp \nu$
\item If $\eta'$ satisfies the two first conditions (i.e.\ $\mu\leqc \eta'\leqp \nu$), one has $\eta\leqc \eta'$.
\end{itemize}
This measure $\eta$ is called the \emph{shadow} of $\mu$ in $\nu$ and we denote it by $S^\nu(\mu)$.
\end{defi}
Shadows are sometimes difficult to determine. An important fact is that they have the smallest variance among the set of measures $\eta'$. Indeed, $\eta\leqc\eta'$ implies $\int x\, \dd\eta=\int x\, \dd\eta'$ and $\int x^2 \dd\eta\leq\int x^2 \dd\eta'$ with equality if and only if $\eta=\eta'$ or $\int x^2 \dd\eta=+\infty$. 
\begin{ex}[Shadow of an atom, Example 4.7 in \cite{BeJu16}]\label{un_atome}
Let $\delta$ be an atom of mass $\alpha$ at a point $x$. Assume that $\delta\leqe \nu$. Then $S^\nu(\delta)$ is the restriction of $\nu$ between two quantiles, more precisely it is
$\nu'=(G_\nu)_\#\lambda_{]s;s'[}$ where $s'- s =\alpha$ and the barycenter of $\nu'$ is $x$.
\end{ex}

The following result is one of the most important on the structure of shadows (Theorem 4.8 of \cite{BeJu16}).
\begin{pro}[Structure of shadows]\label{pro:shadowsum}
Let $\gamma_1,\gamma_2$ and $\nu$ be elements of $\m$ and assume that $\mu=\gamma_1+\gamma_2\leqcp\nu$. Then we have $\gamma_2\leqcp \nu-S^\nu(\gamma_1)$ and
$$S^\nu(\gamma_1+\gamma_2)=S^\nu(\gamma_1)+S^{\nu-S^\nu(\gamma_1)}(\gamma_2).$$
\end{pro}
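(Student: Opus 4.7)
The strategy is to identify $S^\nu(\gamma_1+\gamma_2)$ with $\omega := S^\nu(\gamma_1) + S^{\nu - S^\nu(\gamma_1)}(\gamma_2)$ by verifying the three defining properties of the shadow from Definition~\ref{def:shadow}; uniqueness then gives the claimed identity, and the existence claim $\gamma_2 \leqcp \nu - S^\nu(\gamma_1)$ will emerge along the way (one needs it to make sense of $\omega$).

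First I would dispense with properties (a) and (b) for $\omega$: adding $\gamma_1 \leqc S^\nu(\gamma_1)$ to $\gamma_2 \leqc S^{\nu - S^\nu(\gamma_1)}(\gamma_2)$ gives $\gamma_1+\gamma_2 \leqc \omega$, and the telescoping $S^{\nu - S^\nu(\gamma_1)}(\gamma_2) \leqp \nu - S^\nu(\gamma_1)$ together with $S^\nu(\gamma_1) \leqp \nu$ gives $\omega \leqp \nu$. Note also that $\gamma_1 \leqcp \nu$ is immediate from $\gamma_1 \leqp \gamma_1+\gamma_2 \leqcp \nu$ by testing against convex positive functions, so that $S^\nu(\gamma_1)$ is defined in the first place.

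The core of the proof, which I would phrase in terms of the potential functions $u_\eta(t)=\int|x-t|\,\dd\eta(x)$ introduced in Section~2.1, is the minimality property (c): for every $\eta'$ with $\gamma_1+\gamma_2 \leqc \eta' \leqp \nu$ one must show $\omega \leqc \eta'$. Since convex order between measures of equal mass and first moment is equivalent to pointwise inequality of potentials, the shadow $S^\nu(\mu)$ admits a characterization as the unique element of the admissible class whose potential is the pointwise minimum; this gives a workable formula for $u_{S^\nu(\mu)}$ in terms of $u_\mu$ and $u_\nu$. My plan is to verify the additivity identity at the level of potentials, which reduces (c) to a pointwise computation, and at the same time provides the witness needed for $\gamma_2 \leqcp \nu - S^\nu(\gamma_1)$, namely $S^\nu(\gamma_1+\gamma_2) - S^\nu(\gamma_1)$.

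The principal obstacle is that the naive decomposition $\tilde\eta_1 := S^{\eta'}(\gamma_1)$, $\tilde\eta_2 := \eta'-\tilde\eta_1$ does not close: convex orders cannot be subtracted, so $\gamma_2 \leqc \tilde\eta_2$ is not formal, and one only has $S^\nu(\gamma_1) \leqc \tilde\eta_1$ rather than the pointwise $S^\nu(\gamma_1) \leqp \tilde\eta_1$ that would be needed to compare $\tilde\eta_2$ with $\nu - S^\nu(\gamma_1)$. To circumvent this I would reduce to the atomic case by a density argument, prove the identity one atom at a time using the explicit quantile-restriction description of the single-atom shadow (Example~\ref{un_atome}), and then pass to the limit using continuity of the shadow operation in $\mathcal{T}_1$ (itself obtained from the potential-function characterization). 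The single-atom base case is where the pointwise/convex-order subtlety becomes computable, because both sides of the additivity are then prescribed by cutting out intervals in quantile coordinates of $\nu$ and of $\nu - S^\nu(\gamma_1)$ respectively, and the two operations compose in the obvious way.
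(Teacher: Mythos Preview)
The present paper does not prove this proposition; it is quoted from \cite{BeJu16} (Theorem~4.8 there) and used as a black box, so there is no in-paper argument to compare against.

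Your outline is broadly in the spirit of the proof in \cite{BeJu16}: verify (a) and (b) for $\omega$ directly, reduce (c) to finitely supported measures, add one atom at a time invoking Example~\ref{un_atome}, and close by $W$-continuity of $\mu\mapsto S^\nu(\mu)$. Two steps, however, are looser than your sketch suggests. First, the proposed witness $S^\nu(\gamma_1+\gamma_2)-S^\nu(\gamma_1)$ for $\gamma_2\leqcp\nu-S^\nu(\gamma_1)$ presupposes $S^\nu(\gamma_1)\leqp S^\nu(\gamma_1+\gamma_2)$; potential functions only encode $\leqc$-type comparisons ($u_\alpha\le u_\beta$ does not force $\alpha\leqp\beta$), so this $\leqp$-monotonicity is itself part of what has to be proved and does not fall out of the potential calculus. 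Second --- and this is the real gap --- ``the two operations compose in the obvious way'' is precisely the content of the single-atom lemma, not a triviality. Example~\ref{un_atome} identifies $S^{\nu-S^\nu(\gamma_1)}(\alpha\delta_x)$ as a quantile slice of $\nu-S^\nu(\gamma_1)$, but it says nothing about $S^\nu(\gamma_1+\alpha\delta_x)$, which is the unknown object. To establish that $S^\nu(\gamma_1)+S^{\nu-S^\nu(\gamma_1)}(\alpha\delta_x)$ is $\leqc$-minimal one must, given an arbitrary competitor $\eta'$ with $\gamma_1+\alpha\delta_x\leqc\eta'\leqp\nu$, explicitly carve out of $\eta'$ a piece $\eta'_2$ satisfying $\alpha\delta_x\leqc\eta'_2\leqp\nu-S^\nu(\gamma_1)$, so that the two separate shadow minimalities apply to $\eta'-\eta'_2$ and $\eta'_2$; this carving (done at the quantile level in \cite{BeJu16}) is a genuine lemma and does not follow from bookkeeping with Example~\ref{un_atome}. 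Once that step is secured, your induction over atoms and your limit argument are correct.
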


An important consequence is that if $(\mu_{[0,u]})_u$ is a primitive curve and $\mu_{[0,1]}\leqc \nu$, then the curve $(\nu_{[0,u]})_u$ satisfies $\nu_{[0,u]}(\R)=u$ and using $\gamma_1=\mu_{[0,u]}$ and $\gamma_1+\gamma_2=\mu_{[0,v]}$ we obtain $\nu_{[0,u]}\leqp \nu_{[0,v]}$ for every $u\leq v$. Hence $(\nu_{[0,u]})_u$ is a primitive curve.


We consider the derivatives of shadow curves associated to a primitive curve.
\begin{pro}\label{central}
For $\hat\mu\in \Pi(\lambda,\mu)$ and $\mu\leqc\nu$, let $u\mapsto \mu_{[0,u]}$ have right derivative $\mu_{u_0}$ at $u_0$ and let $\nu_{[0,u]}$ be $S^\nu(\mu_{[0,u]})$. Then $(\nu_{[0,u]})$ has a right derivative at $u_0$. This derivative is given by $\mu_{u_0}P_T$ and $\spt (\mu_{u_0})\subset [\inf T,\sup T]$ where $T$ is the support of $\nu_{]u_0,1]}:=\nu-\nu_{[0,u_0]}$.
\end{pro}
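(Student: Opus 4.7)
The plan is to localize the increment $\nu_{[0,u_0+h]}-\nu_{[0,u_0]}$ using the structure result for shadows (Proposition \ref{pro:shadowsum}), and then to identify its limit through the Kellerer dilation applied to atomic approximations of $\mu_{u_0}$. Applying Proposition \ref{pro:shadowsum} to $\gamma_1=\mu_{[0,u_0]}$ and $\gamma_2=\mu_{]u_0,u_0+h]}$, and noting that $\nu-\nu_{[0,u_0]}=\nu_{]u_0,1]}$, yields
$$\nu_{[0,u_0+h]}-\nu_{[0,u_0]}=S^{\eta}(h\xi_h),\quad \eta:=\nu_{]u_0,1]},\ T:=\spt(\eta),\ \xi_h:=h^{-1}\mu_{]u_0,u_0+h]}.$$
By the hypothesis on the right derivative and the discussion of $\mathcal{T}_1$-differentiation in Subsection 2.2, $\xi_h\to\mu_{u_0}$ in $\mathcal{T}_1$, so the problem reduces to showing $h^{-1}S^{\eta}(h\xi_h)\to\mu_{u_0}P_T$ as $h\to 0^+$.

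First I would establish $\spt(\mu_{u_0})\subset[\inf T,\sup T]$. The relation $\mu_{]u_0,u_0+h]}\leqcp\eta$ (which is needed for the shadow to exist) tested against the positive convex functions $x\mapsto(x-\sup T)_+$ and $x\mapsto(\inf T-x)_+$ yields zero in both cases (since both vanish against $\eta$), hence $\mu_{]u_0,u_0+h]}$ and, passing to the $\mathcal{T}_1$-limit, $\mu_{u_0}$ are supported in $[\inf T,\sup T]$. This is exactly the compatibility condition needed to extend $P_T$ as noted after Definition \ref{def:hitting}.

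Next I would treat the atomic case. For $\xi=\delta_x$ with $x\in[\inf T,\sup T]$, Example \ref{un_atome} identifies $S^{\eta}(h\delta_x)$ with the restriction of $\eta$ between two quantiles of total mass $h$ and barycenter $x$. If $x\in T$, the quantile window shrinks to $\{x\}$ and $h^{-1}S^{\eta}(h\delta_x)\to\delta_x=P_T(x,\cdot)$. If $x\notin T$, with $a=x_T^-<x<x_T^+=b$, the window straddles the gap $(a,b)$ and, since $\eta$ has no mass there, the restriction concentrates in clusters near $\{a,b\}$; the barycenter equation forces the cluster masses to converge to $\tfrac{b-x}{b-a}$ and $\tfrac{x-a}{b-a}$, so $h^{-1}S^{\eta}(h\delta_x)\to P_T(x,\cdot)$. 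For a finitely-atomic $\xi=\sum_i\alpha_i\delta_{x_i}$, iterating Proposition \ref{pro:shadowsum} gives $S^{\eta}(h\xi)=\sum_i S^{\eta_h^i}(h\alpha_i\delta_{x_i})$ with $\eta_h^i:=\eta-\sum_{j<i}S^{\eta_h^j}(h\alpha_j\delta_{x_j})$; since each subtracted term has mass $O(h)$, $\eta_h^i\to\eta$ in total variation, and summing the per-atom limits gives $h^{-1}S^{\eta}(h\xi)\to\xi P_T$.

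The main obstacle is passing from finitely atomic $\xi$ to the actual $\xi_h$. The key fact is that the Kellerer dilation $x\mapsto P_T(x,\cdot)$ is $1$-Lipschitz from $([\inf T,\sup T],|\cdot|)$ to $(\p,W)$---this is precisely the Lipschitz-Markov property at the heart of Subsection \ref{KelSun}---so $\xi\mapsto\xi P_T$ is $1$-Lipschitz on $(\p,W)$. Choosing a finitely-atomic probability $\xi^{\eps}$ supported in $[\inf T,\sup T]$, with the same barycenter as $\mu_{u_0}$ and $W(\mu_{u_0},\xi^{\eps})\leq\eps$, and combining the atomic convergence above with a uniform-in-$h$ continuity estimate for $\xi\mapsto h^{-1}S^{\eta}(h\xi)$ (which I would derive from Proposition \ref{pro:shadowsum} and the variance-minimality characterization of the shadow), one obtains the bound $W\bigl(h^{-1}S^{\eta}(h\xi_h),\xi^{\eps}P_T\bigr)\leq o(1)+C\eps+W(\xi_h,\xi^{\eps})$; letting $h\to 0^+$ at fixed $\eps$ and then $\eps\to 0$ identifies the right derivative of $(\nu_{[0,u]})$ at $u_0$ as $\mu_{u_0}P_T$.
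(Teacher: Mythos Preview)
Your approach is essentially the paper's: localize the increment via Proposition \ref{pro:shadowsum}, treat single atoms through Example \ref{un_atome}, extend to finite atomic measures by iterating Proposition \ref{pro:shadowsum}, and then pass to the general $\mu_{u_0}$ by approximation combined with a Lipschitz estimate for the shadow map. The only notable difference is that the paper first rescales, writing $h^{-1}S^{\eta}(h\xi_h)=S^{h^{-1}\eta}(\xi_h)$, and then for the approximation step invokes the ready-made bound $W(S^{\nu}(\xi),S^{\nu}(\xi'))\leq W(\xi,\xi')$ from \cite[Proposition 2.34]{Ju_ihp}, whereas you propose to extract this uniform-in-$h$ continuity from Proposition \ref{pro:shadowsum} and variance-minimality---that derivation is the one place where your sketch is vaguer than the paper and would need to be made precise.
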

Consider $h^{-1}(\nu_{[0,u_0+h]}-\nu_{[0,u_0]})=h^{-1}(S^\nu(\mu_{[0,u_0+h]})-S^\nu(\mu_{[0,u_0]}))=:\sigma_h$. According to Proposition \ref{pro:shadowsum} $\sigma_h$ equals
$$h^{-1}S^{\nu_{]u_0,1]}}(\mu_{]u_0,u_0+h]})$$
 where we set $\mu_{]u,v]}=\mu_{[0,v]}-\mu_{[0,u]}$. But we know that $h^{-1}(\mu_{[0,u_0+h]}-\mu_{[0,u_0]})=h^{-1}\mu_{]u_0,u_0+h]}$ tends to $\mu_{u_0}$ as $h\downarrow 0$. An easy scaling analysis shows that $\sigma_h$ can in fact be written
$$\sigma_h=S^{h^{-1}\nu_{]u_0,1]}}(h^{-1}\mu_{]u_0,u_0+h]}).$$ We are roughly speaking considering the shadow projection of $\mu_{u_0}$ into the infinite measure $\infty\cdot\nu_{]u_0,1]}$. As $\mathcal{T}_1$ is metric, it is enough for the convergence on the right of $u_0$ that we prove the following lemma:

\begin{lem}\label{shadows2Kellerer}
Let $\eta_n\to\eta$ in $\p$ and $H_n\to \infty$ and assume for every $n\geq 1$, $\eta_n\leqcp H_n\upsilon$. Then $S^{H_n\upsilon}(\eta_n)\to \eta P_T$ in $\p$ and $\spt(\eta) \subset [\inf T, \sup T]$ where $T=\spt(\upsilon)$.
\end{lem}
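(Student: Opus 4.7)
The plan is to sandwich $\sigma_n:=S^{H_n\upsilon}(\eta_n)$ in convex order between two explicit sequences both converging to $\eta P_T$ in $\p$, exploiting the minimality characterizations of the shadow and of the Kellerer dilation. The support claim $\spt(\eta)\subseteq[\inf T,\sup T]$ comes for free: $\eta_n\leqcp H_n\upsilon$ yields an intermediate $\tilde\eta_n$ with $\spt(\tilde\eta_n)\subseteq\spt(\upsilon)=T$; the convex order $\eta_n\leqc\tilde\eta_n$ then confines $\spt(\eta_n)$ to $[\inf T,\sup T]$ (by testing against $(x-\sup T)_+$ and $(\inf T-x)_+$), and this containment survives the passage to the $\mathcal{T}_1$-limit.

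For the lower bound, $\sigma_n\leqp H_n\upsilon$ forces $\spt(\sigma_n)\subseteq T$, and I claim $\eta_n P_T\leqc\sigma_n$. This rests on the basic observation that $P_T(x,\cdot)$ is the $\leqc$-smallest probability measure supported on $T$ with barycenter $x$, established by a secant argument: any such measure is carried by $(-\infty,x_T^-]\cup[x_T^+,+\infty)$, where every convex $\varphi$ lies above the secant through $(x_T^\pm,\varphi(x_T^\pm))$. Disintegrating a martingale coupling of $\eta_n$ and $\sigma_n$ (Strassen) as a kernel $\kappa_x$ supported on $T$ with barycenter $x$, one concludes $P_T(x,\cdot)\leqc\kappa_x$ fiberwise, hence $\eta_n P_T\leqc\sigma_n$ after integration. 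The Kellerer kernel $P_T$ is continuous from $\p$ to $\p$ (straightforward, using $P_T(x,\cdot)=\delta_x$ for $x\in T$ and affine interpolation between $x_T^\pm$ on each gap of $T$), so $\eta_n P_T\to \eta P_T$ in $\p$.

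For the upper bound I construct $\tilde\sigma_n$ with $\eta_n\leqc\tilde\sigma_n\leqp H_n\upsilon$ and $\tilde\sigma_n\to \eta P_T$ in $\p$ by smearing each portion of $\eta_n P_T$ concentrated near a target $y\in T$ along a small $\upsilon$-charged neighborhood of $y$, matching mass and barycenter while keeping the $\upsilon$-density bounded by $H_n$. Since every $y\in T=\spt(\upsilon)$ admits arbitrarily small $\upsilon$-charged neighborhoods and $H_n\to\infty$, the smearing radii may be taken to $0$ as $n\to\infty$. Smearing an atom by a probability measure with the same mean only enlarges it in convex order (Jensen), so $\tilde\sigma_n\geqc\eta_n P_T\geqc\eta_n$, and minimality of the shadow yields $\sigma_n\leqc\tilde\sigma_n$.

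Combining the two convex-order sandwiches gives the pointwise squeeze $u_{\eta_n P_T}\leq u_{\sigma_n}\leq u_{\tilde\sigma_n}$ which converges pointwise to $u_{\eta P_T}$; together with $\int x\,\dd\sigma_n=\int x\,\dd\eta_n\to\int x\,\dd\eta=\int x\,\dd(\eta P_T)$ and unit mass, the potential criterion recalled in the preliminaries upgrades the pointwise squeeze to $\sigma_n\to\eta P_T$ in $\p$. The main obstacle is the upper-bound construction: arranging the smearing to simultaneously preserve total mass, preserve the barycenter of each fiber $P_T(x,\cdot)$, and respect the density bound $H_n$ against $\upsilon$ requires some bookkeeping, especially when $\upsilon$ is singular or atomic (where one must allocate across existing $\upsilon$-atoms rather than smear continuously), but this is feasible because $H_n\to\infty$ makes the density constraint arbitrarily loose for large $n$.
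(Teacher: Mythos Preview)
Your sandwich strategy is genuinely different from the paper's route and the lower half is clean: the observation that $P_T(x,\cdot)$ is the $\leqc$-minimal probability on $T$ with barycenter $x$, hence $\eta_nP_T\leqc\sigma_n$ fiberwise through any martingale coupling, is correct and elegant, and it is not used in the paper.

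The gap is the upper bound, and it is more than bookkeeping. Your description requires smearing an atom $\delta_y$ of $\eta_nP_T$ along a small $\upsilon$-neighbourhood of $y$ \emph{with the same mean}; at a one-sided boundary point of $T$ (the endpoint of a gap where $\upsilon$ does not charge $\{y\}$) the available $\upsilon$-mass lies entirely on one side of $y$, so no local smearing can keep the barycenter at $y$. One can repair this by smearing the whole fiber $P_T(x,\cdot)$ at once (push $x_T^-$ left and $x_T^+$ right in $T$ and rebalance weights to keep mean $x$), but then you must control the \emph{global} constraint $\int Q_n(x,\cdot)\,\dd\eta_n\leqp H_n\upsilon$, i.e.\ bound how much mass from different $x$ accumulates in each region of $T$. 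The cleanest choice $Q_n(x,\cdot):=S^{H_n\upsilon}(\delta_x)$ does satisfy $\int Q_n(x,\cdot)\,\dd\eta_n\leqp H_n\upsilon$ automatically, but showing $\int Q_n(x,\cdot)\,\dd\eta_n\to\eta P_T$ then needs uniform-in-$x$ control of $Q_n(x,\cdot)\to P_T(x,\cdot)$ against the varying $\eta_n$ --- and the natural way to get that is precisely the $1$-Lipschitz property of $\mu\mapsto S^\nu(\mu)$ in $W$, which is the paper's key external input.

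For comparison, the paper proceeds by a four-step approximation rather than a sandwich: (1) single atoms $\eta_n=\delta_x$ via the explicit quantile description of shadows of atoms (Example~\ref{un_atome}); (2) induction to finite atomic $\eta$ using the additivity $S^\nu(\gamma_1+\gamma_2)=S^\nu(\gamma_1)+S^{\nu-S^\nu(\gamma_1)}(\gamma_2)$; (3) general constant $\eta$ by approximating with atomic $\eta_k\leqc\eta$ and invoking the $W$-Lipschitz stability of the shadow map from \cite{Ju_ihp}; (4) non-constant $\eta_n$ by the triangle inequality, again via Lipschitz stability. Your argument, if the upper bound were completed without that Lipschitz input, would be more self-contained; as it stands, the most natural completions circle back to the same tool.
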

\begin{proof}

Note first that due to the convex order relation $\eta_n\leqcp \upsilon$ we have $\spt(\eta_n)\subset [\inf T, \sup T]$. Going to the limit $\spt(\eta)\subset [\inf T, \sup T]$ as well. We are left with the proof of $\eta_n\leqcp H_n\upsilon$.

1. We first prove the result if $\eta_n=\delta_x$ for every $n\in\N$. We prove in fact a little stronger statement: if $\gamma_n$ has mass less than or equal to one and $\gamma_n\leqp H_n\upsilon$, the sequence $S^{H_n\upsilon-\gamma_n}(\delta_x)$ converges to $\eta P_T$.  Moreover $x\in T^\circ$, $x\in T^c$ or $x\in \partial T$. In these three cases the result easily follow from Example \ref{un_atome}.

2. We assume now that for every $n\in\N$, we have $\eta_n=\eta=\sum^n_{k=1}a_k \delta_{x_k}$. The proof is an induction. The initial value $n=1$ has been previously done. We assume the statement for $n-1\geq 1$ and prove it for $n$ by using the decomposition $\eta=\eta'+a_n\delta_n$ where $\eta'=\sum^{n-1}_{k=1}a_k \delta_{x_k}$. We have \cite[Theorem 4.8]{BeJu16}
$$S^{H_n\upsilon}(\eta'+a_n\delta_{x_n})=S^{H_n\upsilon}(\eta')+S^{\beta_n}(a_n\delta_{x_n})$$
where $\beta_n=H_n\upsilon-S^{H_n\upsilon}(\eta')$. Each of the two terms converges to the Kellerer projection  of $\eta'$ resp.\ $a_n\delta_n$ onto $T$. Note that for the second projection we used the full strength of the statement proved in 1.

3. A general measure $\eta$ can be approximated using a convex combination of Dirac masses $\eta_k$ with $\eta_k \leqc \eta$ and such that $\eta_k\to \eta$ \cite[point 3. in proof of Proposition 2.34]{Ju_ihp}. We have
$$W(S^{H_n\upsilon}(\eta_k),S^{H_n\upsilon}(\eta))\leq W(\eta_k,\eta).$$
This goes to zero uniformly in $n$ as $k$ goes to infinity. But  $S^{H_n\upsilon}(\eta_k)\to \eta_k P_T$ as $n$ tends to infinity and the composition with $P_T$ is continuous (cf.\ \cite[Section 2.2]{Ke73}, this can be understood  easily from the action of $P_T$ on the potential functions). Hence we obtain the result for any constant sequence $\eta_n=\eta$.

4. If $\eta_n$ is a non-constant sequence 
$$W(S^{H_n\upsilon}(\eta_n),\eta P_T)\leq W(S^{H_n\upsilon}(\eta_n),S^{H_n\upsilon}(\eta))+W(S^{H_n\upsilon}(\eta),\eta P_T),$$ which tends to zero as required (\cite[Proposition 2.34]{Ju_ihp}). 
\end{proof}


\section{Construction}
Based on the preparations in the previous section we can now rigorously introduce the \emph{lefted shadow couplings}.
\begin{them}[Existence, construction, and uniqueness of the lifted shadow coupling]\label{thm:construction}
Let $\mu$ and $\nu$ be elements of $\p$ with $\mu\leqc \nu$. Let $\hat \mu$ be an element of $\Pi(\lambda,\mu)$. Then there exists a unique element $\hat\pi\in\hM(\hat\mu,\nu)$, the lifted shadow coupling of $\hat{\mu}$ and $\nu$, such that for every $u\in [0,1]$, the marginals of $\pi_{[0,u]}$ are $\mu_{[0,u]}$ and it shadow projection $S^\nu(\mu_{[0,u]})$. Moreover $\nu_{u}=\mu_{u}P_{T(u)}$ where $T(u)$ is the support of $\nu_{]u,1]}:=\nu_{[0,1]}-\nu_{[0,u]}$. 
\end{them}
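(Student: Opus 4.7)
The plan is to build $\hat\pi$ by prescribing its $\lambda$-disintegration $(\pi_u)_{u\in[0,1]}$ and to read off uniqueness from Kellerer's uniqueness result (Proposition \ref{unique}). Start from the primitive curve $(\mu_{[0,u]})_{u\in[0,1]}$ associated with $\hat\mu$. Define the \emph{shadow curve} by $\nu_{[0,u]}:=S^\nu(\mu_{[0,u]})$. Proposition \ref{pro:shadowsum} (applied with $\gamma_1=\mu_{[0,u]}$ and $\gamma_1+\gamma_2=\mu_{[0,v]}$) guarantees $\nu_{[0,u]}\leqp\nu_{[0,v]}$ whenever $u\le v$, and clearly $\nu_{[0,1]}=S^\nu(\mu)=\nu$ while $\nu_{[0,u]}(\R)=\mu_{[0,u]}(\R)=u$; thus $(\nu_{[0,u]})_u$ is a bona fide primitive curve. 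Using the differentiation framework of Section 2.2 there is a Borel set $L\subseteq[0,1]$ of full $\lambda$-measure on which both curves are $\mathcal{T}_1$-differentiable simultaneously. For $u\in L$, Proposition \ref{central} identifies the derivative as
\[
\nu_u \;=\; \mu_u P_{T(u)},\qquad T(u)=\spt(\nu_{]u,1]}),
\]
and asserts $\spt(\mu_u)\subseteq[\inf T(u),\sup T(u)]$, so the hitting coupling $\pi_u:=\mu_u(\id\times P_{T(u)})$ is a martingale transport plan with marginals $\mu_u$ and $\nu_u$.

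Let $\hat\pi$ be the probability measure on $\spac$ whose disintegration against $\lambda$ is $u\mapsto \pi_u$. Its projection on the first two coordinates is $\hat\mu$ since $\pi_u$ has first marginal $\mu_u$ and integration of $(\mu_u)_u$ recovers $\hat\mu$; its second-coordinate marginal is $\int_0^1\nu_u\,\dd u=\nu_{[0,1]}=\nu$. The martingale property of $\hat\pi$ follows because each $\pi_u$ is already a martingale coupling of its marginals. Finally, by construction,
\[
\textstyle\pi_{[0,u]} \;=\; \int_0^u\pi_s\,\dd s
\]
has marginals $\mu_{[0,u]}$ and $\int_0^u\nu_s\,\dd s=\nu_{[0,u]}=S^\nu(\mu_{[0,u]})$, which is exactly the claimed shadow-marginal property. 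The last assertion of the theorem, $\nu_u=\mu_u P_{T(u)}$, is Proposition \ref{central} itself.

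For uniqueness, let $\hat\pi'\in\hM(\hat\mu,\nu)$ also satisfy the shadow-marginal condition. Then its primitive curve $(\pi'_{[0,u]})_u$ on $\R^2$ has first and second marginal curves equal to $(\mu_{[0,u]})_u$ and $(\nu_{[0,u]})_u$, respectively. Differentiating at each $u\in L$ (and using that the differentiation set can be chosen common for all three primitive curves of interest, as explained in Section 2.2), we obtain $\pi'_u\in\p(\R^2)$ with marginals $\mu_u$ and $\nu_u=\mu_u P_{T(u)}$. The lifted martingale property $\int y\,\dd\hat\pi'_{u,x}=x$ passes to the disintegration: for $\lambda$-a.e.\ $u$ the measure $\pi'_u$ is itself a martingale coupling, i.e.\ $\pi'_u\in\M(\mu_u,\mu_u P_{T(u)})$. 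But Proposition \ref{unique} says that this last set is a singleton equal to $\{\mu_u(\id\times P_{T(u)})\}=\{\pi_u\}$. Hence $\pi'_u=\pi_u$ for $\lambda$-a.e.\ $u$, and consequently $\hat\pi'=\hat\pi$.

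The main obstacle is the careful handling of the differentiation/disintegration step: one must ensure that a single Borel set $L$ of full $\lambda$-measure works simultaneously for $(\mu_{[0,u]})_u$, $(\nu_{[0,u]})_u$ and, in the uniqueness part, for $(\pi'_{[0,u]})_u$, and that on $L$ the $\mathcal{T}_1$-derivative exists and coincides with the $\lambda$-disintegration. This is exactly what the framework recalled in Section 2.2 provides (applied to a countable convergence-determining class of test functions). Once this is in place, Propositions \ref{pro:shadowsum}, \ref{central} and \ref{unique} combine to deliver both existence and uniqueness with essentially no further computation.
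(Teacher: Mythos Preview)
Your argument is correct and follows essentially the same route as the paper: construct $\hat\pi$ by prescribing the disintegration $\pi_u=\mu_u(\id\times P_{T(u)})$ via Proposition~\ref{central}, and obtain uniqueness by differentiating any competitor and invoking that $\M(\mu_u,\mu_u P_{T(u)})$ is a singleton. Your version is in fact more explicit than the paper's---you spell out the use of Proposition~\ref{pro:shadowsum} to verify that $(\nu_{[0,u]})_u$ is a primitive curve and cite Proposition~\ref{unique} for the uniqueness step, which the paper leaves implicit.
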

Note that Theorem \ref{thm:construction} implies in particular that there exists a unique $\hat \pi$ satisfying Theorem \ref{MainTheorem} (4).
\begin{proof}[Proof of Theorem \ref{thm:construction}]
Let $\nu_{[0,u]}=S^\nu(\mu_{[0,u]})$ be as in the statement and let $(\mu_u)_{u\in[0,1]}$ and $(\nu_u)_{u\in[0,1]}$ be the derivative curves. According to Proposition \ref{central}, $\nu_u$ can for almost every $u\in [0,1]$ be identified with the Kellerer projection of $\mu_u$ in $T(u):=\spt(\nu_{]u,1]})$ where $\nu_{]u,1]}=\nu-\nu_{]0,u]}$. Hence we can define $\pi_u=\mu_u(\id\times P_{T(u)})\in \M(\mu_u,\nu_u)$ for almost every $u$, and then the corresponding $\hat{\pi}$ and $\pi_{[0,u]}=\int_0^u \pi_{t}\, \dd t$. 

Conversely, let $\hat\pi$ and $(\pi_{[0,u]})_{u\in[0,1]}$ be as in the statement. The curve has marginals $\mu_{[0,u]}$ and $\nu_{[0,u]}=S^\nu(\mu_{[0,u]})$ so that one can apply Proposition \ref{central}. At points $u$ where the derivatives $\pi_u$, $\mu_u$ and $\nu_u$ exist, we have $\pi_u\in\M(\mu_u,\nu_u)$ and $\nu_{[0,u]}=\mu_u P_{T(u)}$ as in the last paragraph, proving the uniqueness part of the statement.
\end{proof}

\subsection{Examples of shadow couplings}
We now further discuss canonical classes of families $(\mu_{]0,u]})_{u\in ]0,1]}$ giving rise to particular shadow couplings. 
We consider different canonical lifts $\hat{\mu}\in\Pi(\Lg,\mu)$ of $\mu$. The first one is the monotone coupling of $\Lg$ and $\mu$ and the second the independent coupling $\Lg\times \mu$.
\begin{itemize}
\item $\mu_{[0,u]}=(G_\mu)_\#\Lg|_{[0,u]}$ (corresponding to the left-curtain coupling);
\item $\mu_{[0,u]}=u\cdot\mu$ (corresponding to the sunset coupling);
\item $\mu_{[0,u]}=S^\mu(u.\delta_m)$ where $m=\int x\,\dd \mu(x)$ (corresponding to the middle curtain coupling). Recall Example \ref{un_atome} for the shadow of an atom.
\end{itemize}
Figures \ref{un}, \ref{deux} and \ref{trois} illustrate the corresponding Skorokhod embeddings when $\mu$ and $\nu$ are two uniform laws.

\subsubsection{The left- and right-curtain couplings}
This case corresponds to the construction given in \cite{BeJu16}, even though the construction described there appears slightly different.
 In fact for $u=F_\mu(x)$ the three marginals of $\hat\pi$ are $\lambda|_{[0,u]}$, $\mu|_{[0,x]}$ and $S^\nu(\mu_{[0,x]})$ so that for every $x\in \R$, $\pi$ has marginals $\mu|_{[0,x]}$ and $S^\nu(\mu_{[0,x]})$. In \cite{BeJu16} this was used to define the left-curtain coupling $\pi_\lc$. 

In an entirely symmetric fashion we can define the right-curtain coupling through $\mu_{[0,u]}=(G_\mu)_\#\Lg|_{[1-u,1]}$ for $u\in[0,1]$.

\begin{figure}[ht]
\begin{center}
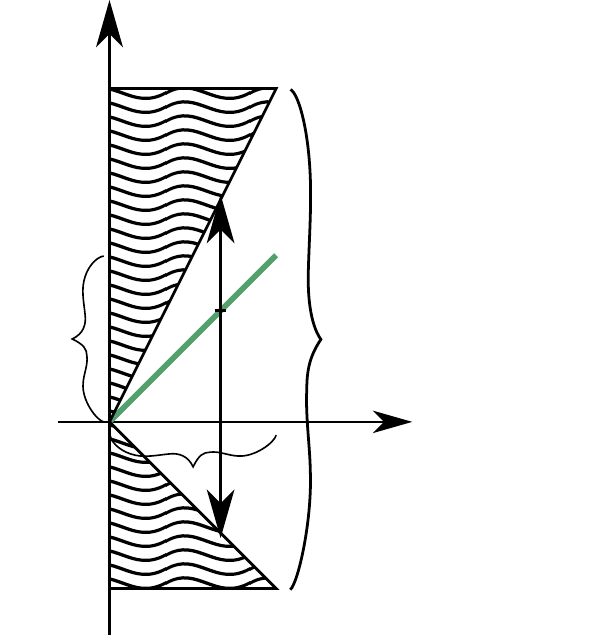
\caption{Left-curtain coupling of uniform measures}\label{un}

\end{center}
\end{figure}


\subsubsection{The sunset coupling}\label{vertic} 
In this case we have $\mu_u=\mu$ for almost every $u\in[0,1]$ and $\nu_u=\mu P_{T(u)}$ where $T(t)=\spt(\nu-S^\nu(u\cdot\mu))$. Hence
$$\textstyle \nu=\int_0^1\mu P_{T(ut)}\,\dd u$$
Of course $T(u)$ can be replaced by $T^*(u)=T(u)\cup(]-\infty,\inf \nu]\cup [\sup \nu,+\infty[)\in\mathcal{I}$. This is a possibility  to make not only the almost-everywhere defined family $\nu_u=\mu_u P_{T(u)}$  unique as in Theorem \ref{Kellerer-Choquet} but also the measure $\chi$.

\begin{figure}[ht]\begin{center}
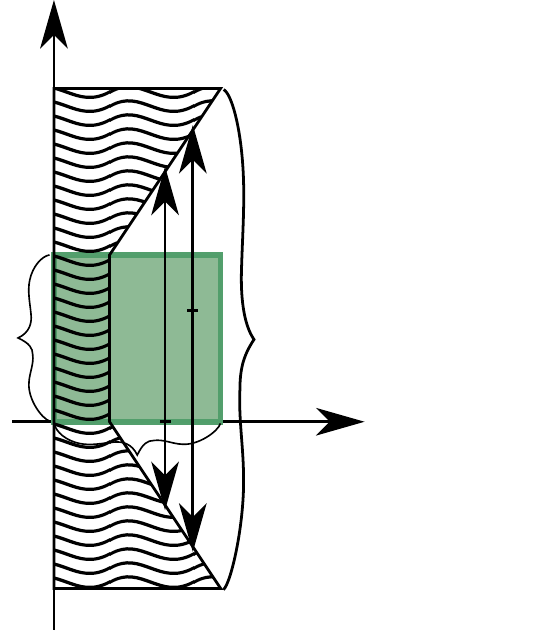
\caption{Sunset coupling of uniform measures}\label{deux}

\end{center}
\end{figure}

\subsubsection{The middle-curtain coupling} As variant of the (left-) curtain coupling, we introduce a 
 middle-curtain coupling. Under the condition that  $\mu$ and $\nu$ are in diatomic order ($\mu\preceq_{DC} \nu$), i.e.
\begin{align}\label{condition}
S^\mu(u\delta_m)\leqc S^\nu(u\delta_m)\quad\text{for every }u\leq 1,
\end{align}
where $m$ denotes the center of $\mu$, 
  the middle-curtain coupling coincide with an exceptionaly simple martingale transport plan that has been introduced in \cite[Section 4]{Ju_seminaire}. (Clearly, the diatomic order is more restrictive than the convex order.)


We define the middle-curtain coupling as the shadow coupling corresponding to the family $\mu_{[0,u]}=
S^\mu(u\delta_m), u\in [0,1]$.
Note that this corresponds to $\mu_u=a(u)\delta_{f(u)}+b(u)\delta_{g(u)}$ where
\begin{itemize}
\item $a(u)f(u)+b(u)g(u)=m$;
\item $f$ is decreasing and $g$ is increasing.
\end{itemize}
If $\mu\preceq_{DC} \nu$, it is straightforward to establish that  $\nu_{[0,u]}=S^\nu(\mu_{[0,u]})=S^\nu(u\delta_m)$, so that $(\nu_u)_{u\in [0,1]}$ is of the same type as $(\mu_u)_{u\in [0,1]}$, that is
$$\nu_u=a'(u)\delta_{f'(u)}+b'(u)\delta_{g'(u)}.$$
Moreover $f'\leq f$ and $g\leq g'$ and $\pi_u$ is concentrated on the four oriented pairs $(f(u),f'(u))$, $(g(u),g'(u))$, $(f(u),g'(u))$ and $(f'(u),g(u))$. More explicitely.
\begin{align*}
\pi_u=&\frac1{g'(u)-f'(u)}\left([(g'(u)-f(u))\delta_{f(u),f'(u)}+(f(u)-f'(u))\delta_{f(u),g'(u)}]\right.\\
&\left.+[(g'(u)-g(u))\delta_{g(u),f'(u)}+(g(u)-f'(u))\delta_{g(u),g'(u)}]\right).
\end{align*}
Finally note that if $(\mu_t)_{t\in T}$ is a family of probability measures indexed by a partial order $T$ so that $s\leq t$ implies $\mu_s\preceq_{DC} \mu_t$, then there exist a martingale $(X_t)_{t\in T}$ with $\law(X_t)=\mu_t$ for every $t\in T$, \cite[Theorem 4]{Ju_seminaire}.
\begin{figure}[ht]
\begin{center}
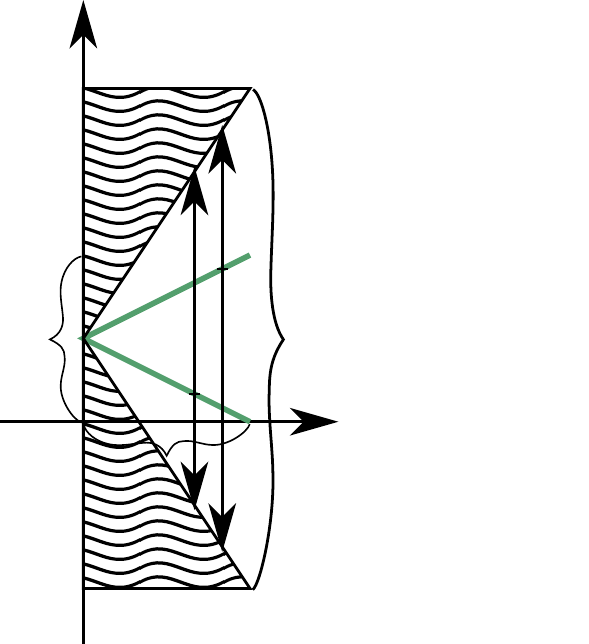
\caption{Middle-curtain coupling of uniform measures}\label{trois}
\end{center}
\end{figure}

\subsubsection{Comparison with the stochastic order, the quantile and independent couplings.}

The stochastic order and the convex order share several common features and a parallel presentation can be given for  shadow couplings of measures in convex order and the classical couplings. As already explained in \cite{BeJu16} the left-curtain coupling can be considerd as the natural counterpart of the quantile coupling. Let us see that in the construction of the left-curtain coupling, replacing the shadows by \emph{stochastic shadows} we obtain the quantile coupling. We assume $\mu\leqs \nu$. Let the stochastic shadow of a (sub)measure $\mu'\leqp \mu$ in $\nu$ be the smallest measure $\eta$ in $\leqs$ such that $\mu'\leqs \eta$ and $\eta\leqp \nu$. The lift $\hat{\mu}$ is the quantile coupling of $\lambda$ and $\mu$, so that $\mu_{[0,u]}=(G_\mu)_\#\lambda|_{[0,u]}$. For $\mu\leqs \nu$, the stochastic shadow of this measure is simply $(G_\nu)_\#\lambda|_{[0,u]}$ independently of $\mu$. Therefore, for the curves $(\mu_{[0,u]})_u$ and $(\nu_{[0,u]})$ defined in this way we recognize the quantile coupling.

We relax now the first condition $\mu_{[0,u]}\leqs \eta$ for defining the stochastic shadow (it is always satisfied for measures of mass $u$ with $\eta\leqp \nu$), so that we can establish a coupling not only in the case $\mu\leqs\nu$ but in general,
keeping $\nu_{[0,u]}=(G_\nu)_\#\Lg|_{[0,u]}$. We obtain again the quantile coupling. 
The left-curtain and the quantile coupling are also analogous on the level optimality properties, see \cite[Sections 1.2, 1.3]{BeJu16}.

While the left-curtain coupling can be viewed as  the quantile coupling of the convex order world, we will explain next in which sense the sunset coupling corresponds to the independent (aka  product) coupling. As before we do not assume $\mu\leqs \nu$ and we still consider stochastic shadows given through $\nu_{[0,u]}=(G_\nu)_\#\Lg|_{[0,u]}$. However we take the same lift $\hat \mu=\lambda\times \mu$ as for the sunset coupling, so that $\mu_u=\mu$. It is then easy to identify the derivative in the target space as $\nu_u=\delta_{G_\nu(u)}$. Therefore the kernel final coupling writes
$$\pi_{[0,1]}=\pi=\int_0^1 \mu(\id \times P_{G_\nu(u)})\,\dd u.$$
This is nothing but $\mu\times \nu$.
Apart from the Kellerer-Choquet representation given  in Paragraph \ref{vertic}, we have encountered another sense in which the sunset coupling is particularly canonical: It is the product coupling of the convex order world.

\section{Proof of Theorem \ref{MainTheorem}}
Throughout this section we assume that $\mu, \nu$ are in convex order and that $\hat \mu\in \Pi(\lambda,\mu)$. Given a measurable $c: \spac\to \R_+$ we consider the optimization problem \begin{equation}\label{G1} \textstyle
P:=P_c:= \inf\left\{ \int c\,  \dd\hat\pi:\hat\pi\in \hM(\hat\mu,\nu)\right\}.
\end{equation}
\begin{proposition}\label{4ToOpt}
Assume that $\hat \pi$ is the lifted shadow coupling corresponding to a curve $(\pi_{[0,u]})_{u\in [0,1]}$ as in Theorem \ref{MainTheorem} (4). Then for all $p\in [0,1], q\in \R$, $\hat \pi$  is an optimizer of \eqref{G1} for $c_{p,q}(u,x,y)=\I_{u\leq p} |y-q|$. 
\end{proposition}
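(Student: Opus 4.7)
The plan is to reduce the optimization to a one-parameter shadow minimization. Since $c_{p,q}$ depends only on $(u,y)$, for any $\hat\gamma\in \hM(\hat\mu,\nu)$ Fubini gives
\begin{equation*}
\textstyle\int c_{p,q}\,\dd\hat\gamma = \int_0^p\int|y-q|\,\dd\gamma_u(x,y)\,\dd u = \int|y-q|\,\dd\tilde\gamma_{[0,p]}(y),
\end{equation*}
where $\tilde\gamma_{[0,p]}$ denotes the $y$-marginal of the measure $\gamma_{[0,p]}$ on $\R^2$. In other words, the cost equals $u_{\tilde\gamma_{[0,p]}}(q)$, the potential function of the $y$-marginal evaluated at $q$.

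Next I would argue that as $\hat\gamma$ varies in $\hM(\hat\mu,\nu)$, the measure $\tilde\gamma_{[0,p]}$ is confined to a set whose $\leqc$-smallest element is precisely $S^\nu(\mu_{[0,p]})$. Specifically, the lift condition $\hat\gamma\in \Pi(\hat\mu,\nu)$ forces the $x$-marginal of $\gamma_{[0,p]}$ to be $\mu_{[0,p]}$, and the martingale constraint applied to convex $\phi$ together with Jensen's inequality on the disintegration $(\pi_{u,x})$ yields $\mu_{[0,p]}\leqc \tilde\gamma_{[0,p]}$. Simultaneously $\tilde\gamma_{[0,p]}\leqp \nu$, since $\tilde\gamma_{[0,p]}+\tilde\gamma_{]p,1]}=\nu$. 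By the defining minimality property of the shadow (Definition \ref{def:shadow}), we conclude $S^\nu(\mu_{[0,p]})\leqc \tilde\gamma_{[0,p]}$.

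Finally I would invoke the potential-function characterization of the convex order (for measures of equal mass and equal barycenter, $\leqc$ is equivalent to pointwise domination of potentials, as recalled in Section~2.1). Note that both conditions are automatic from the martingale constraint: $\tilde\gamma_{[0,p]}(\R) = \mu_{[0,p]}(\R) = p$ and the first moments coincide. Hence
\begin{equation*}
u_{S^\nu(\mu_{[0,p]})}(q)\ \leq\ u_{\tilde\gamma_{[0,p]}}(q)\ =\ \int c_{p,q}\,\dd\hat\gamma.
\end{equation*}
For the shadow coupling $\hat\pi$, Theorem \ref{MainTheorem}(4) asserts that the $y$-marginal of $\pi_{[0,p]}$ is exactly $S^\nu(\mu_{[0,p]})$, so equality is attained and $\hat\pi$ is optimal. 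There is no serious obstacle here; the whole proof is a direct unfolding of the shadow's defining minimality, the only small point being to verify the equal-mass and equal-barycenter conditions required for the potential-function criterion, which hold automatically from the martingale property.
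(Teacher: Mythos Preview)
Your proof is correct and follows essentially the same approach as the paper's: both reduce $\int c_{p,q}\,\dd\hat\gamma$ to $\int|y-q|\,\dd\tilde\gamma_{[0,p]}$, observe that $\tilde\gamma_{[0,p]}$ satisfies $\mu_{[0,p]}\leqc\tilde\gamma_{[0,p]}\leqp\nu$, and conclude via the shadow's $\leqc$-minimality applied to the convex function $y\mapsto|y-q|$. Your version is slightly more detailed (explicitly invoking Jensen and the potential-function language), but there is no substantive difference.
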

\begin{proof} Let $\hat{\pi}\in\hM(\hat{\mu},\nu)$ be a shadow coupling and $(p,q)$ as in the statement. Then $\int c_{p,q}\,\dd\hat{\pi}=\int |y-q|\,\dd\nu_{[0,p]}(y)$ where we recall $\nu_{[0,p]}=S^\nu(\mu_{[0,p]})$. More generally if $\hat{\gamma}$ is an element of $ \hM(\hat{\mu},\nu)$ we have $\int c_{p,q}\dd\hat\gamma=\int |y-q|\,\dd\beta^p(y)$ where $\mu_{[0,p]}\leqc \beta^p$ and $\beta^p\leqp \nu$ (in fact $\beta^p:=(\proj_y)_\#\gamma|_{[0,p]\times \R\times \R})$). Therefore $\nu_{[0,q]}\leqc \beta^p$ and as $y\mapsto |y-q|$ is convex, we have proved that $\hat{\pi}$ is a minimizer.
\end{proof}
Actually if $\hat{\gamma}$ is a minimizer, for any $p\in[0,1]$ the measures $\beta^p$ and $\nu_{[0,p]}$ have the same potential function. Thus they are equal and the curve $p\mapsto (\proj_y)_\#\gamma|_{[0,p]\times \R\times \R})$ is completely determined. But we have proved the uniqueness of such couplings in Theorem \ref{MainTheorem}. Hence $\hat{\gamma}=\hat{\pi}$.

Recall from Theorem \ref{MainTheorem} that a set $\hat \Gamma\subseteq \spac$ is called \emph{monotone}
if for all $u,v,x,x',y^-,y^+, y'$ such that $ u<v, (u,x,y^-), (u,x,y^+), (v,x',y')\in \Gamma$ it holds $y'\notin ]y^-,y^+[.$

\begin{proposition}\label{OptToMon}
Assume that $\hat \pi\in \M(\mu, \nu)$ satisfies one of the following assumptions:
\begin{enumerate}
\item For all $u\in [0,1], q\in \R$, $\hat \pi$  is an optimizer of \eqref{G1} for $c_{p,q}(u,x,y)= \I_{u\leq p} | y-q|$. 
\item $\hat \pi$ is an optimizer of \eqref{G1} for $c(u,x,y)=(1-u) \sqrt{1+y^2}$.
\end{enumerate}
Then there is a monotone set $\hat \Gamma$ such that $\hat\pi(\hat \Gamma)=1$. 
\end{proposition}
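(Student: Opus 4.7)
The plan is to invoke a monotonicity principle for the lifted martingale transport problem to produce a Borel set $\hat \Gamma$ of full $\hat\pi$-measure, and then to rule out any triple violating the monotonicity condition via an explicit three-point rearrangement that strictly decreases the cost.

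\medskip

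\noindent\emph{Step 1 (monotonicity principle).} For any measurable $c\geq 0$ on $\spac$ under which $\hat\pi$ minimizes $P_c$, there exists a Borel set $\hat\Gamma_c\subseteq\spac$ with $\hat\pi(\hat\Gamma_c)=1$ such that for any finite family of points of $\hat\Gamma_c$, no mass rearrangement preserving the $(u,x)$-marginal, the $y$-marginal, and the conditional mean on each $(u,x)$-fiber can strictly decrease the sum of $c$-values. This is the standard $c$-monotonicity statement for martingale optimal transport (analogous to \cite[Lemma~1.11]{BeJu16}) adapted to the lifted setting. Under (1), I apply the principle to $c_{p,q}$ for each $(p,q)$ in a countable dense subset $\mathcal{D}\times\mathcal{E}\subset([0,1]\cap\Q)\times\Q$ and set $\hat\Gamma:=\bigcap_{(p,q)\in\mathcal{D}\times\mathcal{E}}\hat\Gamma_{c_{p,q}}$. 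Under (2) I take $\hat\Gamma:=\hat\Gamma_c$ directly.

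\medskip

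\noindent\emph{Step 2 (competitor construction).} Suppose, for a contradiction, that $(s,x,y^-),(s,x,y^+),(t,x',y')\in\hat\Gamma$ with $s<t$ and $y'\in(y^-,y^+)$. Writing $y'=\theta y^-+(1-\theta)y^+$ with $\theta\in(0,1)$, consider the signed rearrangement
\[
\Delta=\eps\bigl[\delta_{(s,x,y')}-\theta\delta_{(s,x,y^-)}-(1-\theta)\delta_{(s,x,y^+)}\bigr]+\eps\bigl[\theta\delta_{(t,x',y^-)}+(1-\theta)\delta_{(t,x',y^+)}-\delta_{(t,x',y')}\bigr].
\]
Each bracket has zero total mass and zero first moment in $y$ (by the choice of $\theta$), so $\Delta$ preserves the $(u,x)$-marginal and the martingale property on each fiber; the two brackets have opposite $y$-marginals ($\theta\delta_{y^-}+(1-\theta)\delta_{y^+}-\delta_{y'}$ and its negative), so the global $y$-marginal is also preserved.

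\medskip

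\noindent\emph{Step 3 (strict cost decrease).} For (2), put $g(w):=\theta c(w,x,y^-)+(1-\theta)c(w,x,y^+)-c(w,x,y')$, so that the cost change is $\eps\bigl[g(t)-g(s)\bigr]$. Since $c_{uyy}=-(1+y^2)^{-3/2}<0$, the map $y\mapsto c_u(w,x,y)$ is strictly concave; by Jensen applied to $y'=\theta y^-+(1-\theta)y^+$, $g'(w)<0$, hence $g(t)<g(s)$ and the rearrangement strictly decreases the cost, contradicting optimality.
For (1), choose $p\in(s,t)\cap\mathcal{D}$ and $q\in(y^-,y^+)\cap\mathcal{E}$; then $c_{p,q}$ vanishes on the second bracket (since $t>p$), so the cost change equals
\[
\eps\bigl[|y'-q|-\theta|y^--q|-(1-\theta)|y^+-q|\bigr],
\]
which is strictly negative since $q$ lies in the open interval $(y^-,y^+)$ and $|{\cdot}-q|$ is strictly convex on any interval containing $q$ in its interior. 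Again a contradiction.

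\medskip

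\noindent\emph{Main obstacle.} The conceptual content is transparent and the competitor from Step 2 is elementary; the technical work is in Step 1, i.e.\ converting the global optimality $\int c\,\dd\hat\pi\le \int c\,\dd\hat\gamma$ for every $\hat\gamma\in\hM(\hat\mu,\nu)$ into the pointwise (finite-swap) statement on a set of full measure, and in ensuring that the three-point competitor extends to a legitimate element of $\hM(\hat\mu,\nu)$. This is done by the usual measurable-selection/duality machinery for martingale OT; once granted, the two bullets follow immediately by combining Steps 2 and 3 with either a countable family of rational $(p,q)$ (case (1)) or the single cost (case (2)).
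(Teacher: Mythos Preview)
Your proposal is correct and follows essentially the same route as the paper: invoke the monotonicity principle (the paper cites \cite{BeGr14}), intersect the resulting full-measure sets over a countable dense family of $(p,q)$ in case~(1), and derive a contradiction from a hypothetical violating triple via the identical three-point swap $\alpha\mapsto\alpha'$ (your $\Delta$ equals the paper's $\alpha'-\alpha$ up to scaling). The only noteworthy difference is that you spell out case~(2) explicitly via the $c_{uyy}<0$/Jensen argument, whereas the paper treats only case~(1) and remarks that case~(2) is ``very similar''; your write-up is thus slightly more complete.
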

\begin{proof}
We will establish the assertion under the first assumption, the argument based on the second assumption is very similar. 

Using the notation and the monotoneity principle from \cite{BeGr14}, we pick for each $(u,q)\in ([0,1]\times \R) \cap \Q^2$ a monotoneity set $\Gamma_{(u,q)}$ for the cost function and set $$ \Gamma:= \bigcap_{(u,q)\in [0,1]\times \R \cap \Q^2} \Gamma_{(u,q)}.$$
Assume for contradiction that there exist 
$s,t,x,x',y^-,y^+, y'$ such that 
$$ s< t, a:=(s,x,y^-), b:=(s,x,y^+), c:=(t,x',y,)\in \Gamma \Rightarrow y'\in ]y^-,y^+[.$$ 
Pick $\lambda$ such that $y'= (1-\lambda) y^- + \lambda y^+$, $u\in ]s,t[$, and $q$ very close to $y'$ (in comparison to $y^-,y^+$). Set $$ a':=(t,x',y^-), b:=(t,x',y^+), c:=(s,x,y,).$$
Then $$ \alpha:= (1-\lambda) \delta_a + \lambda\delta_b+ \delta_c, \alpha':=(1-\lambda) \delta_{a'} + \lambda\delta_{b'}+ \delta_{c'}$$ are competitors with $\supp \alpha \subset \hat \Gamma$ and $\int c_{u,q}\, \dd\alpha > \int c_{u,q}\, \dd\alpha'$, contradiction.   
\end{proof}

In the next result we establish that any $\hat \pi$  which is monotone admits a barrier representation as in Theorem \ref{MainTheorem} (2). 
\begin{proposition}\label{Monotone2Barrier}
Let $\hat\pi \in \M(\hat \mu, \nu)$ be a transport plan concentrated on a monotone set $\hat \Gamma $. 
Define barriers 
\begin{align}
R_o\, &:=\{(s,y)\in [0,1]\times \R: \exists t> s, (t,y)\in \hat \Gamma\}\\
R_c\, &:=\{(s,y)\in [0,1]\times \R: \exists t\geq s, (t,y)\in \hat \Gamma\}.
\end{align}
Consider a process $(Z_t)_{t\geq 0}=(Z^1_t, Z^2_t)_{t\geq 0}=(Z^1_0, Z^2_t)_{t\geq 0}$ on some probability space  which takes values in  $[0,1]\times \R$ and is specified through
\begin{enumerate}
\item $Z_0\sim \hat\mu$,
\item $Z_t=Z_0+(0,B_t)$, where $(B_t)_t$ is (one dimensional) Brownian motion.
\end{enumerate}
and write $\tau_o, \tau_c$ for the first time $Z$ hits $R_o$ resp.\ $R_c$. Then $\tau_o=\tau_c$ a.s.\ and  
\begin{align}\label{CorrectMT}(Z_0, Z_{\tau_o})\sim (Z_0, Z_{\tau_c})\sim \hat \pi.\end{align}
The martingales $t\mapsto Z_{t\wedge \tau}$ and $t\mapsto B_{t\wedge \tau}$ are uniformly integrable.

There exist Borel maps $T_{\text{up}}, T_{\text{down}}: [0,1]\times \R\to \R, T_{\text{down}}(x)\leq x\leq T_{\text{up}}(x)$ such that
$$\hat\pi\{(u,x,T_i(x)): i\in \{\text{up},\text{down}\}, (u,x) \in   \spa\}=1.$$ 
\end{proposition}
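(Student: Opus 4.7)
The plan is to address the proposition's four assertions in the order: two-point support (the existence of the Borel maps $T_{\text{up}}, T_{\text{down}}$); identification of $\law(Z_0, Z_{\tau_c})$ with $\hat\pi$; the a.s.\ equality $\tau_o = \tau_c$; and uniform integrability of the stopped process.

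For the two-point support, I would argue that for $\hat\mu$-a.e.\ $(u,x)$ the fiber $\hat\Gamma_{u,x} := \{y : (u,x,y) \in \hat\Gamma\}$ contains at most two elements. Monotonicity gives us that any pair $y^- < y^+$ in $\hat\Gamma_{u,x}$ forbids every later $(t,x',y') \in \hat\Gamma$ (with $t > u$) from having $y' \in (y^-, y^+)$. A third support point $y^* \in (y^-, y^+)$ of $\hat\pi_{u,x}$ would therefore be ``invisible'' to $\hat\Gamma$ at all strictly later times; combining this over a positive-$\hat\mu$-measure set of such fibers and comparing with the third marginal $\nu$ via a Fubini-type computation yields a contradiction. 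With two-point support in hand, I define $T_{\text{down}}(u,x) = \min \hat\Gamma_{u,x}$ and $T_{\text{up}}(u,x) = \max \hat\Gamma_{u,x}$; Borel measurability is obtained from a measurable-selection argument applied to the Borel set $\hat\Gamma$.

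Next, for the barrier identification, I would verify that $T_{\text{up}}(u,x)$ (respectively $T_{\text{down}}(u,x)$) is the closest point of $R_c^u := \{y : (u,y) \in R_c\}$ to $x$ from above (respectively below). The inclusion $\hat\Gamma_{u,x} \subseteq R_c^u$ is immediate, while the opposite direction follows by applying monotonicity to $(u, x, T_{\text{down}}(u,x))$ and $(u, x, T_{\text{up}}(u,x)) \in \hat\Gamma$ to exclude any stray $y' \in R_c^u$ lying strictly between them. Because one-dimensional Brownian motion started at $x$ first hits a closed set at its nearest points on either side, conditionally on $Z_0 = (u, x)$ the stopped value $Z^2_{\tau_c}$ lies in $\{T_{\text{down}}(u,x), T_{\text{up}}(u,x)\}$, and the weights are forced by the martingale property to coincide with those of $\hat\pi_{u,x}$, giving $(Z_0, Z_{\tau_c}) \sim \hat\pi$. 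The equality $\tau_o = \tau_c$ a.s.\ follows because $R_c \setminus R_o$ consists of height--time pairs appearing only at a single endpoint time in $\hat\Gamma$, which form a polar graph for the Brownian paths started at $\hat\mu$-generic points. Finally, uniform integrability of $(Z^2_{t \wedge \tau})_t$ is immediate from $Z^2_\tau \sim \nu \in \p$ and the $L^1$ convergence of the stopped martingale, while uniform integrability of $(B_{t \wedge \tau})_t$ follows by translation.

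The hardest step is the two-point support claim. Monotonicity forbids intermediate support points only at strictly \emph{later} times, so it cannot rule out a third support point from purely local reasoning about a single fiber; the contradiction must exploit the global constraint imposed by the third marginal $\nu$, executed through a Borel selection of the middle support point on a purported positive-mass set of three-point fibers and a Fubini argument balancing $\hat\pi$-mass at the critical height against what $\nu$ can supply. An alternative route would try to identify $\hat\pi_{u,x}$ directly with the hitting projection via Proposition~\ref{unique}, but this requires first establishing $\supp(\hat\pi_{u,x}) \subseteq \overline{R_c^u}$ and a delicate analysis of the closure structure of the barrier sections.
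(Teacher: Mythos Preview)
Your approach reverses the paper's logical order, and this creates genuine gaps rather than just a different route. The central problem is the two-point support step. Monotonicity of $\hat\Gamma$ only relates points at \emph{different} times $s<t$; it says nothing about the fiber $\hat\Gamma_{u,x}$ itself, so three (or more) support points at a single $(u,x)$ are not excluded by any local argument. You acknowledge this and invoke a ``Fubini-type contradiction'' against the marginal $\nu$, but no such contradiction is available: a middle support point $y^*$ being forbidden at later times does not prevent it from contributing the right amount of $\nu$-mass from time $u$ (and from earlier times). The same issue recurs in your barrier identification: you need $R_c^u\cap (T_{\text{down}}(u,x),T_{\text{up}}(u,x))=\emptyset$, but a point $y'\in R_c^u$ may come from some $(u,x',y')\in\hat\Gamma$ with the \emph{same} time coordinate $u$, and then the monotonicity hypothesis (which requires $s<t$) does not exclude it. Finally, your uniform-integrability argument is circular: knowing $Z^2_\tau\sim\nu\in\p$ does not by itself give $L^1$-convergence of the stopped process (think of Brownian motion stopped when it first hits $1$).

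The paper avoids all of this by going in the opposite direction. It first builds \emph{some} stopping time $\tau$ with $(Z_0,Z^2_\tau)\sim\hat\pi$ (e.g.\ by running the Az\'ema--Yor embedding of $\hat\pi_{u,x}$ conditionally on $Z_0=(u,x)$), then proves a ``continuation assertion'' via the optional section theorem and the strong Markov property: before time $\tau$, the path can always be continued inside $\Omega_0$ so as to end strictly above or strictly below its current position. Combined with monotonicity this yields the sandwich $\tau_c\leq\tau\leq\tau_o$, and a separate lemma (countability of local-minimum values of the barrier function) gives $\tau_o=\tau_c$ a.s. The two-graph structure $T_{\text{up}},T_{\text{down}}$ is then an immediate consequence of the barrier-hitting description, not a prerequisite for it, and uniform integrability is obtained from Monroe's criterion via minimality of the barrier embedding.
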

\begin{proof}
Fix a disintegration $(\pi_{u,x})$ of $\hat \pi$ wrt $\hat \mu$ and write $ \Gamma_{u,x}$ for the section of $\hat \Gamma $ in $(u,x)$. Then $\hat\mu(\Gamma_0)=1$, where
$$\textstyle \Gamma_0=\left\{(u,x): \pi_{u,x} (\Gamma_{u,x})=1, \int |y|\, \dd\pi_{u,x}<\infty,  \int y\, \dd\pi_{u,x}= x\right\}.$$
Define for each $(u,x)\in \spa$, $\tau_{u,x}$ to be the Azema-Yor solution (say) of the Skorokhod embedding problem such that $B_{\tau_{u,x}}\sim\pi_{u,x}$. Then define a stopping time $\tau$ such that conditionally on $Z_0=(u,x)$ we have $\tau=\tau_{u,x}$. It follows that 
$(Z_0,Z^2_\tau)\sim \hat \pi$ and that for all elements $\omega$ of a full measure set $\Omega_0$ we have $(Z^1_0(\omega),Z^2_0(\omega),Z^2_\tau(\omega))\in \hat\Gamma$.
Next we claim that there exists a full measure subset $\Omega_1$ of $\Omega_0$ such that for all $\omega \in\Omega_1$ and every $t<\tau(\omega)$, the following assertion holds true:

{\it Continuation Assertion on $(\omega,t)$.} There are $\omega_i\in \Omega_0,  i=1,2$ satisfying
\begin{enumerate}
\item $t<\tau(\omega_i)$, $(Z_{s}(\omega))_{s\leq t}=(Z_{s}(\omega_i))_{s\leq t}$ for $i=1,2$,
\item  $Z^2_{\tau}(\omega_1)<Z^2_t(\omega) < Z^2_{\tau}(\omega_2)$. 
\end{enumerate}
Assume for contradiction that  the set $$\{(\omega, t): t<\tau(\omega)\mbox{ and Continuation Assertion fails}\}=:D$$
is not evanescent, i.e.\ that $\proj_{\Omega}(D)$ does not have $\P$-measure $0$. 
Set 
\begin{align*}
 D^-&\,:=\{(\omega, t)\in  \ll0,\tau\ll\,: \omega_1\in \Omega_0,
t<\tau(\omega_1), (Z_{s}(\omega_1))_{s\leq t}=(Z_{s}(\omega))_{s\leq t} \Rightarrow Z_\tau(\omega_1)\leq Z_t(\omega)\}\\
D^+&\,:=\{(\omega, t)\in \ll0,\tau\ll\,: \omega_2\in \Omega_0,
r<\tau(\omega_2), (Z_{s}(\omega_2))_{s\leq t}=(Z_{s}(\omega_2))_{s\leq t} \Rightarrow Z_\tau(\omega_2)\geq Z_t(\omega)\}
\end{align*}
such that $D=D^-\cup D^+$. 
If $D$ is not evanescent, then by the optional section theorem there exists a stopping time $\sigma$ such that $\P(\sigma < \infty)> 0 $ and $$\{(\omega, \sigma(\omega )): \sigma(\omega)<\infty\}\subseteq D^-\text{ or } \{(\omega, \sigma(\omega )): \sigma(\omega)<\infty\}\subseteq D^+.$$ Combined with the strong Markov property this leads to a contradiction with the optional stopping theorem.

We claim that on $\Omega_1$
\begin{align}
\tau_c\leq \tau \leq \tau_o.
\end{align}
Note that the first inequality is satisfied by definition of $\tau_c$. To establish the second inequality we assume for contradiction that there exists $\omega \in \Omega_1$ such that $\tau_o(\omega) < \tau(\omega)$. 

Then $t^*:=\min\{t\geq 0: Z_t(\omega) \in R_o\} < \tau(\omega)$.
Set $y':=Z^2_{t^*}(\omega)$ and
 $(u,x)=(Z^1_0(\omega),Z^2_{0}(\omega))$.
By definition of $R_o$, there  exist $v>u$ and $x'$ such that $(v,x',y')\in \hat\Gamma$.
Pick $\omega_i, i=1,2$ according to the Continuation Assertion.
Setting $y_i=Z^2_{\tau}(\omega_i), i=1,2$, we have $(u,x,y_i)\in \hat\Gamma$, contradiction.

By Lemma \ref{BarrierEquality} $\tau_c=\tau_o$ almost surely hence \eqref{CorrectMT} holds. 

To see that $(Z_{t\wedge \tau})$ (resp.\ $(B_{t\wedge \tau})$) is uniformly integrable we recall a result of Monroe~\cite{Mo72} which asserts that a solution $\tau$ of the Skorokhod problem is minimal (i.e.\ there is no strictly smaller solution) if and only if Brownian motion up to time $\tau$ is uniformly integrable. In the present context it is straight forward to verify that $\tau$ provides a minimal embedding of $\nu$ wrt $Z^2$ (we refer to \cite[Proposition 4.1]{BeHeTo15} for complete details), hence $(Z^2_{t\wedge \tau})$ is uniformly integrable.

The rest is immediate.
\end{proof}

In the proof we used the following lemma from \cite{BeCoHu16} (we include the proof for the convenience of the reader).
\begin{lemma}\label{BarrierEquality}
Let $\hat\mu$ be a probability measure on $\R^2$ such that the projection onto the horizontal axis $\proj_x \hat\mu$ is continuous (in the sense of not having atoms) and let $\phi: \R\to \R$ be a Borel function. Set 
$$R_\op:= \{(x,y): x>\phi(y)\},\quad R_\cl:= \{(x,y): x\geq\phi(y)\}.$$
Start a vertically moving Brownian motion in $\mu$ and define
$$ \tau_\op:= \inf\{ t: (x, y+B_t)\in R_\op\}, \quad \tau_\cl:= \inf\{ t: (x, y+B_t)\in R_\cl\}.$$
Then $\tau_\cl=\tau_\op$ almost surely.
\end{lemma}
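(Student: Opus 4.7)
The inequality $\tau_\cl \le \tau_\op$ is immediate from $R_\op \subseteq R_\cl$, so the content of the lemma is the reverse inequality. I would argue by contradiction, showing the event $\{\tau_\cl < \tau_\op\}$ has probability zero. On this event the path $t\mapsto y_0 + B_t$ satisfies $\phi(y_0+B_t)\ge x_0$ on the whole interval $[\tau_\cl,\tau_\op)$, since it has not yet entered $R_\op$; on the other hand, $\tau_\cl$ is the infimum of times $t$ with $\phi(y_0+B_t)\le x_0$, so there exist $t_n\downarrow\tau_\cl$ for which $\phi(y_0+B_{t_n})\le x_0$ and hence $\phi(y_0+B_{t_n})=x_0$. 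In particular the level set $\phi^{-1}(x_0)$ is visited in every right-neighborhood of $\tau_\cl$.

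The first substantive step is a measure-theoretic reduction. I would isolate $E:=\{x\in\R: |\phi^{-1}(x)|>0\}$ and observe that $E$ is at most countable: the preimages $(\phi^{-1}(x))_{x\in E}$ are pairwise disjoint Borel subsets of $\R$ with positive Lebesgue measure, and a standard exhaustion argument (for each $n,k\in\N$ at most $2nk$ of them have $|[-n,n]\cap\phi^{-1}(x)|\ge 1/k$) makes any such family countable. Since $\proj_x\hat\mu$ has no atoms, one concludes $\proj_x\hat\mu(E)=0$, so for $\hat\mu$-a.e.\ starting pair $(x_0,y_0)$ the level set $\phi^{-1}(x_0)$ is Lebesgue-negligible.

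For such a generic $(x_0,y_0)$ I would invoke the strong Markov property at $\tau_\cl$ together with Blumenthal's $0$--$1$ law: letting $z:=y_0+B_{\tau_\cl}$, a Brownian motion restarted from $z$ either reaches $\{\phi<x_0\}$ instantaneously (in which case $\tau_\op=\tau_\cl$) or stays in $\{\phi\ge x_0\}$ on an initial interval of positive length. In the latter scenario, the previous accumulation property forces a positive-length sub-interval of $[\tau_\cl,\tau_\op)$ to lie entirely in $\phi^{-1}(x_0)$, so the path accumulates positive Lebesgue occupation time in the null set $\phi^{-1}(x_0)$. This contradicts the absolute continuity of the Brownian occupation measure: by Fubini,
\begin{equation*}
\E\int_0^\infty \mathbf{1}_{\phi^{-1}(x_0)}(y_0+B_t)\,dt = \int_0^\infty \P(y_0+B_t\in\phi^{-1}(x_0))\,dt = 0,
\end{equation*}
because $y_0+B_t$ has a Lebesgue density for every $t>0$.

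The step I expect to be the most delicate is precisely the bridge from the accumulation of level-set visits at $\tau_\cl$ and the confinement $\phi\ge x_0$ on $[\tau_\cl,\tau_\op)$ to a \emph{positive-length} sub-interval inside $\phi^{-1}(x_0)$; a priori the visits could form a dense but Lebesgue-null subset of $[\tau_\cl,\tau_\op)$. Resolving this will hinge on the Blumenthal dichotomy applied to $z$ together with the oscillation of Brownian motion (the path crosses every level in any right-neighborhood), possibly via iterating the strong Markov property at successive hits of $R_\cl$, or equivalently by exploiting the regular-point characterization of hitting times to upgrade "non-regular for $\{\phi<x_0\}$" to "positive occupation in $\phi^{-1}(x_0)$", which then collides with the Fubini identity above.
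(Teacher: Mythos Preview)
Your setup through the strong Markov/Blumenthal dichotomy is correct, and you are right that the passage from ``accumulation of visits to $\phi^{-1}(x_0)$'' to ``positive occupation time in $\phi^{-1}(x_0)$'' is the crux. Unfortunately this gap is not fillable with your exceptional set $E=\{x:|\phi^{-1}(x)|>0\}$. Take $\phi(y)=\dist(y,C)$ for a measure-zero Cantor set $C$. Then $\phi^{-1}(0)=C$ is Lebesgue-null, so $0\notin E$; yet from any starting point $(0,y_0)$ with $y_0\in C$ one has $\tau_\cl=0$ while $\tau_\op=\infty$ (since $\phi\ge0$ everywhere), and the Brownian occupation time in $C$ is zero almost surely. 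Thus the implication ``$x_0\notin E\Rightarrow\tau_\cl=\tau_\op$ a.s.'' is false, and no contradiction via the Fubini/occupation-measure identity is available. Your suggested repairs (iterated strong Markov, regular-point theory) run into the same obstruction.

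The paper replaces $E$ by the set $I:=\{\phi(y): y\text{ is a local minimum of }\phi\}$, which is also countable (values at local minima of any real function form a countable set). With this choice the argument closes immediately: on $\{\tau_\cl<\tau_\op\}$ the path is confined to $\{\phi\ge x_0\}$ on a time interval of positive length, and the \emph{range} of Brownian motion over any such interval is a nondegenerate spatial interval $J$ containing $z=y_0+B_{\tau_\cl}$ in its interior. Hence $\phi\ge x_0$ on all of $J$, and the accumulation points $y_0+B_{t_n}\in\phi^{-1}(x_0)\cap J$ are therefore local minima of $\phi$ with value $x_0$, forcing $x_0\in I$. In the Cantor example above every point of $C$ is a local minimum and $0\in I$, so the paper's exceptional set correctly captures exactly the obstruction that your set $E$ misses.
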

\begin{proof}
 Obviously $\tau_\cl\leq \tau_\op$. 

 We say that $y$ is a local minimum of $\phi$ if $\phi(y')\geq \phi(y)$ for all $y'$ in a neighborhood of  $y$. Set 
$$I:=\{\phi (y): y \text{ is a local minimum of } \phi\}.$$
It is then not difficult to prove (and certainly well known) that $I$ is at most countable: assume by contradiction that there exist an uncountably family $A\subseteq \R$ and corresponding neighborhoods $(a-\eps_{sun}, a+\eps_a), a\in A$ such that $\phi(x)\geq \phi(a)$ for $x\in (a-\eps_a, a+\eps_a)$ and $a\neq a'$ implies $f(a)\neq f(a')$. Passing to an uncountable subset of $A$, we can assume that there is some $\eta >0$ such that $\eps_a>\eta$ for all $a\in A$. For $a\neq a'$ we cannot have $|a-a'| < \eta$ for then  $a\in (a'-\eps_{a'}, a'+\eps_{a'})$ as well as $a'\in (a-\eps_{a}, a+\eps_{a})$ which would imply that $f(a)=f(a')$. Hence $|a-a'|\geq \eta$ which implies that $A$ is countable, giving a contradiction. 

\smallskip

On the complement of $I\times \R$ we have almost surely
\begin{align}\tau_\op=0 \quad \Longleftrightarrow \quad \tau_\cl=0\end{align}
as a consequence of the strong Markov property.\end{proof}

We have thus obtained an interpretation of monotone transport plans in terms of a barrier-type solution to the Skorokhod problem.  This interpretation is useful for us since  it allows us to use a short argument of Loynes \cite{Lo70} (which in turn builds on Root \cite{Ro66}) to show that there is only one monotone transference plan.

\begin{lemma}[cf.~Loynes \cite{Lo70}]\label{BarrierUniqueness}
Let $\hat\pi_1$, $\hat\pi_2$ be monotone transport plans in $\hM(\hat\mu,\nu)$, with corresponding maps $T^\P=(T^i_{\text{up}}, T^i_{\text{down}})$  and denote by $R^{{\hat \pi_i}}, i=1,2$ the corresponding `closed' barriers as in Proposition \ref{Monotone2Barrier}. Then $\tau_{R^{{\hat \pi_1}}}= \tau_{R^{{\hat \pi_2}}}$, a.s.
\end{lemma}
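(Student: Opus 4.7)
The plan is to adapt the Loynes--Root uniqueness argument to our left-barrier setting. Write $\tau_i:=\tau_{R^{\hat\pi_i}}$ for $i=1,2$, and set $\bar\tau:=\tau_1\wedge\tau_2$ and $\tau':=\tau_1\vee\tau_2$. The crucial elementary observation is that the unordered multi-set $\{\tau_1,\tau_2\}$ coincides with $\{\bar\tau,\tau'\}$ on every $\omega$, which produces the pointwise identity
\begin{equation*}
\phi(B_{\tau_1})+\phi(B_{\tau_2})=\phi(B_{\bar\tau})+\phi(B_{\tau'})
\end{equation*}
for \emph{any} function $\phi$, and, by a short case analysis on the position of $t$ relative to $\bar\tau$ and $\tau'$, the analogous pathwise identity $B_{t\wedge\tau_1}+B_{t\wedge\tau_2}=B_{t\wedge\bar\tau}+B_{t\wedge\tau'}$ for every $t\geq 0$.

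Next I would establish the uniform integrability needed to run optional stopping at each of $\bar\tau,\tau_1,\tau_2,\tau'$. Proposition \ref{Monotone2Barrier} provides UI of $(B_{t\wedge\tau_i})_t$; the stopped process $(B_{t\wedge\bar\tau})_t$ inherits UI as a further stopping of a UI martingale (since $\bar\tau\leq\tau_i$), while UI of $(B_{t\wedge\tau'})_t$ is not automatic but follows from the additive identity above, since sums of UI processes are UI. Doob's optional sampling then delivers the martingale relations $B_{\bar\tau}=\E[B_{\tau_i}\mid\mathcal{F}_{\bar\tau}]$ \AND $B_{\tau_i}=\E[B_{\tau'}\mid\mathcal{F}_{\tau_i}]$, whence $\law(B_{\bar\tau})\leqc\nu\leqc\law(B_{\tau'})$.

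Taking expectations in the pointwise $\phi$-identity with the strictly convex, linearly bounded test function $\phi(y)=\sqrt{1+y^2}$ (so that every $\phi$-expectation below is finite since $\nu$ has a finite first moment) yields
\begin{equation*}
\E[\phi(B_{\bar\tau})]+\E[\phi(B_{\tau'})]=2\int\phi\,\dd\nu.
\end{equation*}
Combined with the two convex-order inequalities $\E[\phi(B_{\bar\tau})]\leq \int\phi\,\dd\nu\leq \E[\phi(B_{\tau'})]$, both inequalities must be equalities. The Strassen representation of $\law(B_{\bar\tau})\leqc\nu$ as a martingale coupling together with the strict-convexity equality case of conditional Jensen applied to $\phi$ forces $B_{\bar\tau}\sim\nu$; a second application of the same equality case to $B_{\bar\tau}=\E[B_{\tau_i}\mid\mathcal{F}_{\bar\tau}]$ with identically distributed endpoints yields $B_{\tau_i}=B_{\bar\tau}$ a.s., and in particular $B_{\tau_1}=B_{\tau_2}$ a.s.

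It remains to promote the equality of the terminal positions to equality of the hitting times. If $\P(\tau_1<\tau_2)>0$, then on this event $(U,B_{\tau_2})\in R^{\hat\pi_2}$ combined with $B_{\tau_1}=B_{\tau_2}$ would give $(U,B_{\tau_1})\in R^{\hat\pi_2}$, contradicting the definition of $\tau_2$ as the first entry time of $Z$ into $R^{\hat\pi_2}$; the symmetric case $\{\tau_2<\tau_1\}$ is handled identically. The main technical hurdle I anticipate is the uniform integrability at $\tau'$, which is not automatic from Proposition~\ref{Monotone2Barrier} alone and is precisely what makes the parallel treatment of the two extremes $\bar\tau$ and $\tau'$ via the pairing identity essential.
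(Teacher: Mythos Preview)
Your argument has a genuine gap at the key inference in Step~3. From the pointwise pairing identity you correctly obtain
\[
\E[\phi(B_{\bar\tau})]+\E[\phi(B_{\tau'})]=2\int\phi\,\dd\nu,
\]
and the convex-order relations give $\E[\phi(B_{\bar\tau})]\leq \int\phi\,\dd\nu\leq \E[\phi(B_{\tau'})]$. But these three constraints do \emph{not} force equality: writing $a=\E[\phi(B_{\bar\tau})]$, $c=\int\phi\,\dd\nu$, $b=\E[\phi(B_{\tau'})]$, the system $a+b=2c$, $a\leq c\leq b$ is satisfied by $a=c-x$, $b=c+x$ for \emph{any} $x\geq 0$. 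So the conclusion ``both inequalities must be equalities'' is unjustified, and with it the claim $B_{\bar\tau}\sim\nu$ collapses. A concrete counterexample at the level of embeddings: take $\tau_1$ the Root and $\tau_2$ the Rost embedding of the same $\nu$; both are minimal (hence UI), your identity and convex-order inequalities hold, yet $B_{\tau_1\wedge\tau_2}\not\sim\nu$ in general.

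This is exactly the place where the Loynes argument used in the paper does real work. The paper compares the two barriers via the set $K=\{y:m_1(y)>m_2(y)\}$ (with $m_i(y)=\sup\{m:(m,y)\in R^{\hat\pi_i}\}$) and uses the left-barrier geometry to show that $\{B_{\tau_2}\in K\}\subset\{B_{\tau_1}\in K\}$; equality of the embedded law upgrades this to $\Omega_K:=\{B_{\tau_1}\in K\}=\{B_{\tau_2}\in K\}$ with $\tau_1\leq\tau_2$ on $\Omega_K$ and $\tau_1\geq\tau_2$ on $\Omega_K^c$. This pathwise ordering is what allows a direct computation showing $B_{\tau_1\wedge\tau_2}\sim\nu$, after which minimality finishes the proof. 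Your soft convex-analytic route bypasses precisely this geometric comparison, and without it there is no mechanism to rule out the slack $x>0$ above. If you want to salvage your strategy, you would need an independent reason why $\law(B_{\tau'})\leqc\nu$ as well (forcing $x=0$); but that is essentially the content of the Loynes/Root comparison and does not come for free from UI and optional stopping.
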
 

\begin{proof}
For a set $A\subset\R$, we abbreviate $R_i(A):= R^{{\hat \pi_i}}\ \cap\ (\R\times A)$ and $\tau_i=\tau_{R^{{\hat \pi_i}}}$ for $i=1,2$. Denote
 \begin{align}
 K:= \big\{y: m_1(y) > m_2(y) \big\}   
 &\mbox{ where}&
 m_i(y):=\sup \{m: (m,y) \in R^{{\hat \pi_i}}\},~~i=1,2.
 \end{align}
Fix a trajectory $(Z_t)_t=(Z_t(\omega))_t$ such that $Z^2_{\tau_2} \in K$. Then $(Z_t)_t $ hits ${R_2}(K)$ before  it enters $R_2(K^C)$. 
But then $(Z)_t $ also hits $ R_1(K)$ before it enters $R_1(K^C)$. Hence 
$$
B_{\tau_2} \in K \quad \Longrightarrow \quad B_{\tau_1} \in K.
$$ 
As both stopping times embed the same measure, this implication  is an equivalence almost surely, and we may set $\Omega_K:=\{B_{\tau_1} \in K\}= \{B_{\tau_2} \in K\}$. On $\Omega_K$ we have $\tau_1 \leq \tau_2$ while $\tau_1 \geq \tau_2$ on $\Omega_K^C$. Then, for all Borel subset $A\subset\R$:
 \begin{align}
 \P\big[B_{\tau_1 \wedge \tau_2}\in A\big]
 &=&
 \P\big[B_{\tau_1 \wedge \tau_2}\in A, \Omega_K\big]
 +\P\big[B_{\tau_1 \wedge \tau_2}\in A, \Omega_K^c\big]
 \\
 &=&
 \P\big[B_{\tau_1}\in A, \Omega_K\big]
 +\P\big[B_{\tau_2}\in A, \Omega_K^c\big]
 \\
 &=&
 \P\big[B_{\tau_1}\in A\cap K\big]
 +\P\big[B_{\tau_2}\in A\cap K^c\big]
\\
&=&
 \P\big[B_{\tau_2}\in A\cap K\big]
 +\P\big[B_{\tau_2}\in A\cap K^c\big]
\\
&=&
 \P\big[B_{\tau_2}\in A\big]
\end{align}
since $B_{\tau_i}\sim\nu$. Hence $\tau_1 \wedge \tau_2$ embeds $\nu$. Similarly, we see that  $\tau_1 \vee \tau_2$ also embeds $\nu$. Since $\tau_1$ and $\tau_2$ are both minimal embeddings, we deduce that $\tau_1 \wedge \tau_2=\tau_1 $ as well as $\tau_1 \wedge \tau_2=\tau_2. $\end{proof}

Taking the results of this section we can now establish our main theorem.
\begin{proof}[Proof of Theorem \ref{MainTheorem}]
We have already seen in Theorem \ref{thm:construction} that there exists  $\hat\pi\in \hM(\hat\mu,\nu)$ satisfying Theorem \ref{MainTheorem} (4). By virtue of Propositions \ref{4ToOpt}, \ref{OptToMon} we have that $\hat \pi$ is monotone as required in \ref{MainTheorem} (3) and by Proposition \ref{Monotone2Barrier} $\hat \pi$ admits a barrier type representation as in \ref{MainTheorem} (2). Moreover, by Lemma \ref{BarrierEquality} we find that there exists a unique such $\hat\pi$.

Finally, by the standard compactness-continuity argument  there exists $\hat\pi$ which solves the optimization problem in Theorem \ref{MainTheorem} (1), by Proposition \ref{OptToMon} it is monotone and hence uniquely determined as before.
\end{proof}

Note that in the proof of Theorem \ref{MainTheorem}, we did not use the uniqueness part in the statement of Theorem \ref{thm:construction}; rather we have obtained a second derivation of this uniqueness property based on Lemma \ref{BarrierUniqueness}. 

We close this section with a  remark on the implication of the above results for the curtain coupling.
\begin{remark}\label{FamousCoro} We consider the curtain coupling $\pi_{\lc}$ corresponding to the case where the lift $\hat \mu$ is given by the monotone rearrangement between Lebesgue measure and $\mu$. 
Assume for simplicity that $\mu$ has no atoms such that the 
lift $\hat\mu$ is concentrated on the graph of a 1-1 function  elements of $\M(\mu,\nu)$ correspond in a 1-1 manner to elements of $\hM(\hat\mu,\nu)$. It then follows from the respective optimality property of $\hat \pi$ that $\pi_{\lc} $ minimizes 
\begin{align}
\gamma \mapsto \int \phi(x)\psi(y)\, \dd\gamma(x,y)
\end{align}
on the set $\M(\mu,\nu)$, where $\varphi\geq 0$ is strictly decreasing and $\psi\geq 0$ is strictly convex and the minimum over $\M(\mu, \nu)$ is finite. 
Moreover, there exist a Borel set $S\subseteq \R$ and two measurable  functions $T_1, T_2:S\to \R$ such that 
\begin{enumerate}
\item $\pi_\lc$ is concentrated on the graphs of $T_1$ and $T_2$.
\item For all $x\in\R,\, T_1(x)\leq x \leq T_2(x)$.
\item For all $x<x'\in \R$, $T_2(x) < T_2(x')$ and $T_1(x')\notin \,]T_1(x), T_2(x)[.$
\end{enumerate}
This recovers \cite[Corollary 1.6]{BeJu16}.
\end{remark}

%
%

\section{The sunset coupling as a non-optimizer and shadow couplings as optimizers to general transport problems.}

An important message of \cite{BeJu16, HeTo13} is that the left-curtain couplings are characterized as the optimizers to martingale optimal transport problems for a large class of cost functions. This goes together with the fact that the  support of the left-curtain coupling is typically a very `small' set -- if $\mu$ is continuous, it is contained in the graphs of two functions. 

In contrast, the sunset coupling typically has a `large' support. Hence we do not expect it to solve  a martingale transport problem except in trivial instances. This is underlined by the following simple example.
\begin{example}\label{NonOptimizer}
Let $\mu$, $\nu$ be measures in convex order such that 
\begin{enumerate}
\item $\text{conv}(\supp(\mu))\cap \supp(\nu)=\emptyset$
\item $\mu, \nu$ consist of finitely many atoms.
\end{enumerate}
Assume $c$ is such that the sunset coupling is optimal for the martingale  transport problem. Then \emph{all} elements of $ \M(\mu, \nu)$ are optimal for the martingale  transport problem.  
\end{example}
\begin{proof}We first note that $\supp (\pi_{\sun})= \supp(\mu)\times \supp(\nu)$ under our assumptions on $\mu, \nu$.

 In the present atomic case, the martingale transport problem can be formulated as a linear programming problem and it admits a natural dual problem for which strong duality holds. It follows from this that every martingale transport plan $\pi\in \M(\mu, \nu)$ satisfying $\supp(\pi) \subseteq \supp (\pi_{\sun}) $ is optimal. 
\end{proof}
For simplicity, we have stated Example \ref{NonOptimizer} for discrete marginals but (with some work) it is not difficult to see that the same phenomenon carries over to more general cases. 

\medskip

However we find it interesting to note that shadow couplings posses optimality properties in a different sense:
Recently, Gozlan et al. \cite{GoRoSaTe14} introduced a framework for \emph{general} transport problems. As in the classical case one optimizes over the set of transport plans $\pi\in \P(\mu, \nu)$, where $\mu, \nu$ are probabilities on Polish spaces $X,Y$. In contrast to the classical case, more general cost functionals are considered. Writing ${\mathcal P}(Y)$ for the set of all probability measures on $Y$, a general cost is a function $C:X\times {\mathcal P}(Y)\to [0,\infty]$ and its associated transport costs are 
\begin{align}
T_C(\nu|\mu):= \inf_{\pi\in \Pi(\mu, \nu)}\int C(x,\bar\pi_x)\, \dd\mu(x),
\end{align}
where we use $(\bar\pi_x)_x$ to denote disintegration wrt\ $\mu$. 

The shadow couplings appear as optimizers to such general transport problems. E.g.\, we will see below that the sunset coupling is the unique optimizer
for the general transport cost function
\begin{align}\label{GenVar}
C(x, \bar \pi):=
\inf_{\alpha\in \hM(\lambda \times \delta_x, \bar \pi)}\int (1-u) \sqrt{1+y^2} \, \dd\alpha(u,x', y).
\end{align}
Here the function $(u,y)\mapsto (1-u)\sqrt{1+y^2}$ could be replaced by any function of the form $(u,y)\mapsto \phi(u)\psi(y)$, where $\phi$ is strictly increasing and $\psi$ strictly convex and sufficiently integrable wrt the given marginals.
The cost function defined in \eqref{GenVar} exhibits a relatively intuitive behavior: If $\bar \pi$ does not have  center $x$ the costs equal $+\infty$. If $\bar \pi$ is centered around $x$, the more $\bar \pi$ is spread out, the higher are the costs.
 
 \begin{proposition}
Let $\mu$ and $\nu$ be in convex order and fix a disintegration $(\bar \mu_x)_x$ of $\hat \mu$ wrt\ $\mu$ and set 
\begin{align}\label{GenVar2}
 C^{\hat \mu}(x, \bar \pi):=\inf_{\alpha\in \hM(\bar\mu_x\times \delta_x,\bar \pi)} \int  (1-u)\sqrt{1+y^2}\, \dd\alpha(u, x', y).
\end{align}
Then the shadow coupling associated to $\hat\mu$ is the unique optimizer of the general transport problem associated to $ C^{\hat \mu}$. 
\end{proposition}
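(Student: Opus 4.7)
The plan is to reduce the general transport problem for $C^{\hat\mu}$ to the lifted martingale optimal transport problem of Theorem \ref{MainTheorem}(1), and then to invoke the uniqueness stated there. I would first observe that $\hM(\bar\mu_x\times\delta_x,\bar\pi_x)$ is nonempty only if $\bar\pi_x$ has barycenter $x$, so $C^{\hat\mu}(x,\bar\pi_x)=+\infty$ unless $\pi\in\M(\mu,\nu)$, and the outer infimum can be restricted to $\pi\in\M(\mu,\nu)$.

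The heart of the argument is the disintegration/gluing identity
$$\textstyle \int C^{\hat\mu}(x,\bar\pi_x)\,\dd\mu(x)=\inf\bigl\{\int (1-u)\sqrt{1+y^2}\,\dd\hat\alpha:\,\hat\alpha\in\hM(\hat\mu,\nu),\,(\proj_{x,y})_{\#}\hat\alpha=\pi\bigr\},$$
valid for every $\pi\in\M(\mu,\nu)$. The ``$\leq$'' direction is Fubini: given any $\hat\alpha$ on the right, disintegrating with respect to the second coordinate produces for $\mu$-a.e.\ $x$ a measure $\alpha^x:=\bar\mu_x(\dd u)\,\delta_x(\dd x')\,\hat\alpha_{u,x}(\dd y)\in\hM(\bar\mu_x\times\delta_x,\bar\pi_x)$, and then $\int (1-u)\sqrt{1+y^2}\,\dd\hat\alpha=\int\!\int (1-u)\sqrt{1+y^2}\,\dd\alpha^x\,\dd\mu(x)\geq\int C^{\hat\mu}(x,\bar\pi_x)\,\dd\mu(x)$; taking the infimum over $\hat\alpha$ yields the claim. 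The converse ``$\geq$'' direction relies on measurable selection: the infimum defining $C^{\hat\mu}(x,\bar\pi_x)$ is attained by weak compactness of $\hM(\bar\mu_x\times\delta_x,\bar\pi_x)$, and a Jankov--von Neumann selection produces a $\mu$-measurable family $x\mapsto\alpha^x$ of minimizers; the glued measure $\hat\alpha:=\int\alpha^x\,\dd\mu(x)$ is then a witness in $\hM(\hat\mu,\nu)$ projecting onto $\pi$ whose cost equals $\int C^{\hat\mu}(x,\bar\pi_x)\,\dd\mu(x)$.

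Taking the infimum over $\pi\in\M(\mu,\nu)$ on both sides of the identity gives
$$\textstyle T_{C^{\hat\mu}}(\nu\mid\mu)=\inf_{\hat\alpha\in\hM(\hat\mu,\nu)}\int (1-u)\sqrt{1+y^2}\,\dd\hat\alpha,$$
which Theorem \ref{MainTheorem}(1) identifies as uniquely attained by the lifted shadow coupling $\hat\pi$ associated to $\hat\mu$. Therefore $\pi:=(\proj_{x,y})_{\#}\hat\pi$, the shadow coupling, attains $T_{C^{\hat\mu}}(\nu\mid\mu)$, and any other minimizer $\pi'$ lifts via the ``$\geq$'' half of the identity to some $\hat\alpha'\in\hM(\hat\mu,\nu)$ realizing the same minimal total cost; by uniqueness in Theorem \ref{MainTheorem}(1), $\hat\alpha'=\hat\pi$, so $\pi'=\pi$. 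The main obstacle I expect is the measurable selection step---one must verify that $x\mapsto\hM(\bar\mu_x\times\delta_x,\bar\pi_x)$ has a Borel graph (a routine consequence of Borel measurability of the disintegration $x\mapsto\bar\pi_x$ and continuity of the marginal and martingale constraints) before invoking a selection theorem---but once this is in place, the whole argument is driven purely by Theorem \ref{MainTheorem}(1).
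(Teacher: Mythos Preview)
Your proposal is correct and follows essentially the same approach as the paper: both reduce the general transport problem to the lifted problem of Theorem \ref{MainTheorem}(1) via the disintegration/gluing identity $\hat\pi\leftrightarrow(\pi,(\alpha_x)_x)$ with $\alpha_x\in\hM(\bar\mu_x\times\delta_x,\bar\pi_x)$. The paper is terser---it simply writes the chain of equalities and invokes the representation of $\hat\pi$ through a measurable family $(\alpha_x)_x$---whereas you spell out the two inequalities, the measurable selection step, and the uniqueness argument explicitly; but the underlying idea is identical.
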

We note that the solution to the optimization problems  \eqref{GenVar} / \eqref{GenVar2} is straightforward to characterize in the nontrivial case where $x$ is the barycenter of $\bar \pi$:
The optimizer $\alpha$ is the unique element of $\hM(\bar\mu_x\times \delta_x, \bar \pi)$ which is concentrated on the graphs of two functions $T_{\text{up}}:[0,1]\to [x, \infty)$, $T_{\text{down}}:[0,1]\to (-\infty, x]$, where $T_{\text{up}}$ is increasing and $T_{\text{down}}$ is decreasing.
\begin{proof}
Given $\hat \pi\in \hM(\hat \mu, \nu)$, write $\pi$ for the corresponding martingale transport $\pi\in \M( \mu, \nu)$ and $(\bar \pi_x)_x$ for its disintegration wrt $\mu$. We note that $\hat\pi$ can be $\mu$-a.s.\ uniquely represented in the form
\begin{align}
\hat \pi(A\times B\times C)= \int\dd\mu(x) \int \dd\alpha_x(u,x',z)\, 
\I_{A\times B\times C}(u,x,y),
\end{align}
where $(\alpha_x)$ is a (measurable) family with $\alpha_x\in \hM(\bar\mu_x\times \delta_x,\bar\pi_x).$
 We then find
\begin{align}
\inf_{\hat \pi\in \hM(\hat\mu, \nu)}\int (1-u)\sqrt{1+y^2}\, \dd\hat\pi&\, =
\inf_{\pi\in \M(\mu, \nu)}\inf_{\alpha\in \hM(\bar\mu_x\times \delta_x,\bar \pi_x)} \iint  (1-u)\sqrt{1+y^2}\, \dd\alpha(u, x', y) \, \dd\mu(x)\\ 
&\,=  \inf_{\pi\in \Pi(\mu, \nu)} \int C^{\hat \mu} (x,\bar\pi_x)\, \dd\mu(x).\qedhere
\end{align}
\end{proof}

\bibliographystyle{abbrv}
\bibliography{joint_biblio}

\end{document}